\documentclass{article}

\usepackage[margin = 1.5in]{geometry}

\usepackage{tikz}
\usepackage{tikz-cd}
\usepackage{url, hyperref}

\tikzset{->-/.style={decoration={
  markings,
  mark=at position .45 with {\arrow{>}}},postaction={decorate}}}

\usepackage{amsmath}
\usepackage{amssymb}
\usepackage{enumitem}
\usepackage{graphicx}
\usepackage{mathdots}
\usepackage{color}
\usepackage{diagbox}
\usepackage{array, makecell}
\usepackage{rotating}
\usepackage{amsthm}

\newtheorem{definition}{Definition}
\newtheorem{theorem}[definition]{Theorem}

\newtheorem{proposition}[definition]{Proposition}
\newtheorem{corollary}[definition]{Corollary}
\newtheorem{lemma}[definition]{Lemma}
\newtheorem{remark}[definition]{Remark}

\def\C{\mathbb{C}}
\def\Q{\mathbb{Q}}
\def\Z{\mathbb{Z}}

\def\R{\mathbb{R}}
\def\H{\mathsf{H}}
\def\L{\mathsf{L}}
\def\E{\mathsf{E}}
\def\m{\mathsf{m}}
\def\QH{QH^*(X)}
\def\P{\mathbb{P}}
\def\oM{\overline{\mathcal{M}}_{g,n}}
\def\vT{\mathrm{vTev}^X_{g,n,k}}

\title{Quantum Euler class and virtual Tevelev degrees of Fano complete intersections}
\author{Alessio Cela }
\date{April 2022}

\begin{document}

\maketitle

\begin{abstract}
We compute the quantum Euler class of Fano complete intersections $X$ in a projective space. In particular, we prove a recent conjecture of A. Buch and R. Pandharipande, namely \cite[Conjecture 5.14]{BP}. Finally we apply our result to obtain formulas for the virtual Tevelev degrees of $X$. An algorithm computing all genus $0$ two-point Hyperplane Gromov Witten invariants of $X$ is illustrated along the way.
\end{abstract}

\tableofcontents

\section{Introduction}

\subsection{Quantum Euler class of a variety}

Let X be a nonsingular, projective, algebraic variety over $\C$ of dimension $r$ and let $\{\gamma_j \}_{j=0}^N \subset H^*(X)$ be a homogeneous basis with $\gamma_0=1 $ and $\gamma_N=\mathsf{P}$ the point class. The small quantum cohomology ring $\QH$\footnote{Unless otherwise specified, (co)homology and quantum cohomology will always be taken with $\Q$-coefficients in this paper.} of $X$ is defined via the $3-$point genus $0$ Gromov-Witten invariants:
$$
\gamma_i \star \gamma_j=\sum_{\beta \in H_2(X,\Z)} \sum_k \langle\gamma_i,\gamma_j,\gamma_k^{\vee}\rangle^X_{0,\beta} q^\beta \gamma_k
$$
where $\gamma_k^{\vee} \in H^*(X)$ is the dual of $\gamma_k$ with respect to the intersection form on $X$, defined by the conditions 
$$
\int_X \gamma_j \cup \gamma_k^{\vee}= \delta_{j,k} \ \text{for } j = 0,...,N.
$$
Here we are following the notation of \cite{FP}.

Let also
$$
\Delta= \sum_j \gamma_j^{\vee} \otimes \gamma_j \in H ^*(X) \otimes H^*(X)
$$
be the Künneth decomposition of the diagonal class of $X \times X$. 

The \textbf{quantum Euler class} of $X$ is the image of $\Delta$ under the product map
$$
H^*(X) \otimes H^*(X) \xrightarrow{\star} \QH.
$$

This is a canonically defined element of $QH^*(X)$, first introduced by Abrams in \cite{Abr}. In terms of the basis $\{ \gamma_j \}$, we have
$$
\E= \sum_j \gamma_j^{\vee} \star \gamma_j.
$$
Note that in particular
$$
\E \displaystyle \equiv \chi(X) \mathsf{P} \ \text{mod} \ q
$$
where $\chi(X)$ is the Euler characteristic of $X$.

In this paper we compute the quantum Euler class of all Fano nonsingular complete intersections of dimension at least $3$ in a projective space (see Theorem \ref{Main theorem} below). In particular, we prove a conjecture of Buch-Pandharipande, namely \cite[Conjecture 5.14]{BP}.

It is worth noting that although a priori the definition of $\E$ involves also the primitive cohomology of $X$, in our case of interest, this class actually lies in the restricted quantum cohomology ring $\QH^{\mathrm{res}}$ of $X$, that is the quantum cohomology ring coming from the projective space (see Proposition \ref{Graber} below for the exact definition of $\QH^{\mathrm{res}}$). This is a key reason we were able to obtain so explicit a formula for $\E$.

Finally, in \cite{BP} the quantum Euler class $\E$ of any variety $X$ is related via a very simple formula (see \cite[Theorem 1.4]{BP}) to the virtual Tevelev degrees of $X$, that is the virtual count of genus $g$ maps of fixed complex structure in a given curve class $\beta$ through $n$ general points of $X$. Exploiting their formula and our explicit expression of $\E$ for $X$ a Fano nonsingular complete intersections of dimension at least $3$, we are able to compute all the virtual Tevelev degrees of such varieties $X$ (see Theorem \ref{vTev} below).

\subsection[Preliminary results on complete intersections]{Preliminary results on complete intersections}
We now specialize to smooth complete intersections of dimension at least $3$. Let $X= V(f_1,...,f_L) \subset \P^{r+L}$ be a nonsingular complete intersection of dimension $r$. Assume for the rest of the paper that $r \geq 3$ and that for $i=1,...,L$,
$$
f_i \in H^0(\P^{r+L}, \mathcal{O}(m_i))
$$
where $m_i \geq 2$. 

Let $\m=(m_1,...,m_L)$ be the vector of degrees and, for $a,b \in \Z$ adopt the following notation:
$$
| \m|= \sum_{i=1}^L m_i,  \hspace{0.3cm}
\m^{a \m +b}= \prod_{i=1}^L m_i^{am_i+b}.
$$
\subsubsection{Cohomology of complete intersections}
Consider the map 
\begin{equation} \label{restriction map}
H^i(\P^{r+L}) \rightarrow H^i(X)
\end{equation}
induced by the inclusion $X \subset \P^{r+L}$. By the Lefschetz Hyperplane Theorem, this map is an isomorphism for all $i \leq 2r, i \neq r$ and is injective for $i=r$. Also, for $i=r$, we have a canonical decomposition 
$$
H^r(X)=H^r(X)^{\mathrm{prim}} \oplus H^r(X)^{\mathrm{res}}
$$
as a direct sum of the primitive cohomology and the restricted cohomology of degree $r$.

Explicitly 
$$
H^r(X)^{\mathrm{res}}= \mathrm{Im}( H^r(\P^{r+L}) \hookrightarrow H^r(X))
$$
and 
$$
H^r(X)^{\mathrm{prim}}= \mathrm{Ker}( \H \cup - : H^r(X) \rightarrow H^{r+2}(X))
$$
where $\H \in H^*(X)$ is the hyperplane class.

Note that
$$
\mathrm{dim} H^r(X)^{\mathrm{prim}}= (-1)^r(\chi(X)-(r+1))
$$
where $\chi(X)$ is the Euler characteristic of $X$.

\subsubsection{Quantum cohomology of Fano complete intersections}
From now on, we will further restrict our attention to the Fano case 
\begin{equation}\label{eqn:Fano}
|\m| \leq r+L.
\end{equation}
Since $ r \geq 3$, the map in Equation \ref{restriction map} is an isomorphism when $i=2$ and thus
$$
H_2(X)= \Q. \L
$$
where $\L \in H^*(X)$ is the class of a line in $X$. It follows that $\QH$ is a graded algebra over the polynomial ring $\Q [q]$ in one variable $q$ and as a $\Q[q]$-modules we have
$$
\QH = H^*(X) \otimes_{\Q} \Q[q].
$$
The degree of $q$ is equal to $2 d$ where
$$
d = r+L+1-| \m|.
$$
which is greater than $0$ by Equation \ref{eqn:Fano}.

Depending on the degree $|\m|$ of $X$, the ring $\QH$ satisfies the following magic relation (due to A. Givental):

\begin{itemize}
    \item if $|\m| \leq r+L-1$ we have: 
    \begin{equation}\label{eqn:magic1}
    \H^{\star (r+1)}= \m^{\m} q \H^{\star (|\m |-L)}
    \end{equation}
    \item if $|\m| = r+L$ we have:
    \begin{equation}\label{eqn:magic2}
    (\H+\m!q)^{\star (r+1)}= \m^{\m} q (\H + \m !q)^{\star r}
    \end{equation}
\end{itemize}
Some cases of Equation \ref{eqn:magic1} are proved in \cite{Bea}. A complete proof of both relations can be found in Givental's paper \cite{Gi}. There also are two very nice expositions of Givental's work, see \cite[Section 3.2]{P} and \cite[Corollary 4.4 and Corollary 4.19]{BDPP}. Relations  \ref{eqn:magic1} and \ref{eqn:magic2} will be essential for the object of this paper.  

\subsection{Statement of the main Theorem}
In Theorem \ref{Main theorem} below we will give explicit formulas for the quantum Euler class of any smooth Fano complete intersection $X\subseteq \P^{r+L}$ as above. Much of our work starts with the results in \cite{BP}, some of which we now recall for the reader's convenience.

\begin{proposition}[due to T. Graber] \label{Graber}
Let $\mathsf{R} = \mathrm{Span} \{1, \H,....,\H^r\} \subset H^*(X).$ Then,  $(\mathsf{R} \otimes_{\Q} \Q [q], \star)$ is a subring of $\QH$. 
\end{proposition}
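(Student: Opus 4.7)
The plan is to reduce the claim to the vanishing of certain genus-zero three-point Gromov-Witten invariants with one primitive insertion, and then establish that vanishing via a monodromy argument. Concretely, I would first choose a homogeneous basis $\{\gamma_k\}$ of $H^*(X)$ compatible with the orthogonal decomposition $H^*(X) = \mathsf{R} \oplus H^r(X)^{\mathrm{prim}}$. Orthogonality under the intersection form is forced by the very definition of $H^r(X)^{\mathrm{prim}}$ together with grading considerations in degrees different from $r$, so the dual basis $\{\gamma_k^\vee\}$ respects the same splitting; in particular, $\gamma_k$ is primitive if and only if $\gamma_k^\vee$ is. From the defining expansion
$$
\H^i \star \H^j = \sum_{\beta, k} \langle \H^i, \H^j, \gamma_k^\vee \rangle_{0,\beta}^X \, q^\beta \, \gamma_k,
$$
the statement $\H^i \star \H^j \in \mathsf{R} \otimes_\Q \Q[q]$ for $0 \le i, j \le r$ therefore reduces to the vanishing $\langle \H^i, \H^j, \gamma \rangle_{0,\beta}^X = 0$ for every $\gamma \in H^r(X)^{\mathrm{prim}}$ and every effective curve class $\beta$.

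Next I would obtain this vanishing by deformation and monodromy. Over the parameter space $B$ of smooth complete intersections of multidegree $\m$ in $\P^{r+L}$, the cohomologies $H^*(X_t)$ assemble into a local system, and the three-point Gromov-Witten invariants $\langle -,-,- \rangle_{0,\beta}^{X_t}$ vary as a monodromy-invariant section of the corresponding trilinear form bundle, by deformation invariance of GW theory. The hyperplane powers $\H^i$ are restrictions of classes from $H^*(\P^{r+L})$, hence are fixed by the monodromy action of $\pi_1(B)$ on $H^*(X_t)$; consequently the functional $\gamma \mapsto \langle \H^i, \H^j, \gamma \rangle_{0,\beta}^{X_t}$ defines a $\pi_1(B)$-invariant linear form on the local system $H^r(X_t)^{\mathrm{prim}}$.

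Finally, I would invoke the classical Picard--Lefschetz fact that the monodromy action of $\pi_1(B)$ on $H^r(X_t)^{\mathrm{prim}}$ admits no nonzero invariant vectors: this primitive cohomology is spanned by vanishing cycles whose orbits under the Picard--Lefschetz transvections leave no line fixed. It follows that the only monodromy-invariant linear functional on $H^r(X_t)^{\mathrm{prim}}$ is the zero functional, which yields the desired vanishing. The one genuinely non-formal input is this no-invariants property of the monodromy representation; once that is taken as known, the remainder of the argument is essentially bookkeeping, relying only on the definition of the quantum product, the splitting of $H^*(X)$ under the intersection form, and the deformation invariance of Gromov--Witten invariants.
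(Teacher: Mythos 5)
Your proposal is correct, and it is essentially the standard argument behind the result: the paper itself gives no independent proof of Proposition \ref{Graber} (it defers to \cite[Proposition 5.1]{BP}), and the reduction you make --- closure of $\mathsf{R}\otimes_\Q\Q[q]$ under $\star$ is equivalent to the vanishing of $\langle \H^i,\H^j,\gamma\rangle^X_{0,\beta}$ for $\gamma$ primitive, proved by deformation invariance plus monodromy --- is exactly the mechanism underlying that citation, and it closely parallels the paper's own Lemma \ref{lem:monodromy argument}, which later proves the stronger descendant version. The main difference is the monodromy input: you invoke the classical Picard--Lefschetz fact that the primitive (vanishing) cohomology carries no nonzero monodromy invariants, while the paper's Lemma \ref{lem:monodromy argument} instead quotes Deligne's big-monodromy theorem \cite{Del} to identify the algebraic monodromy group with $\mathrm{Sp}$ or $\mathrm{O}$ (or the Weyl group of $D_{r+3}$ when $\m=(2,2)$, $r$ even) and then uses $-\mathrm{Id}$, respectively the reflections, to kill the functional; your weaker input suffices here precisely because the other two insertions are monodromy-fixed, whereas the stronger input is needed for the paper's more general statement. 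Three small points you should make explicit to be fully rigorous: the monodromy acts trivially on $H_2(X,\Z)\cong\Z\cdot\L$ (it preserves $\H$ and the degree pairing), so the class $\beta=k\L$ is indeed fixed and the functional $\gamma\mapsto\langle \H^i,\H^j,\gamma\rangle^X_{0,\beta}$ is genuinely $\pi_1$-invariant; the passage from ``no invariant vectors'' to ``no invariant functionals'' uses that the intersection form is non-degenerate and monodromy-invariant on $H^r(X)^{\mathrm{prim}}$, giving an equivariant identification with the dual; and to apply the Lefschetz-pencil statement to the family of complete intersections one should note that $H^r(X)^{\mathrm{prim}}$ coincides with the vanishing cohomology of a pencil of degree-$m_L$ sections of $V(f_1,\dots,f_{L-1})$, for which the no-invariants (indeed irreducibility) theorem is classical.
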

\begin{proof}
This is \cite[Proposition 5.1]{BP}.
\end{proof}

This ring is denoted by $\QH^{\mathrm{res}}$. 

\begin{remark}\label{rmk:basis}
The elements $1,\H,...,\H^{\star r}$ form a basis of $\QH^{\mathrm{res}}$ as $\Q [q]$-module.
This follows from the fact that $\H^i= \H^{\star i}$ mod $q$ and $\H^{\star i} \in \QH^{\mathrm{res}}$ for $i=0,...,r$.
\end{remark}

Let $\E$ be the quantum Euler class of $X$.

\begin{lemma} \label{E is restricted}
We have $\E \in QH^*(X)^{\mathrm{res}}$.
\end{lemma}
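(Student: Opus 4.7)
The plan is to exploit the decomposition $H^*(X) = \mathsf{R} \oplus H^r(X)^{\mathrm{prim}}$, where by the Lefschetz Hyperplane Theorem the only possible primitive contribution sits in degree $r$. I would choose a homogeneous basis $\{\gamma_j\}$ of $H^*(X)$ adapted to this splitting: outside degree $r$ the basis consists of powers $\H^i$, and in degree $r$ we take $\H^{r/2}$ (when $r$ is even) together with a basis of $H^r(X)^{\mathrm{prim}}$. Because $\H$ annihilates the primitive part under cup product, the restricted and primitive summands of $H^r(X)$ are orthogonal for the intersection pairing, so the Poincar\'e dual $\gamma_j^\vee$ stays in the same summand as $\gamma_j$. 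This produces a splitting $\E = \E^{\mathrm{res}} + \E^{\mathrm{prim}}$, and $\E^{\mathrm{res}} = \sum_{\gamma_j \in \mathsf{R}} \gamma_j^\vee \star \gamma_j$ automatically lies in $\QH^{\mathrm{res}}$ by Proposition~\ref{Graber}.

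It remains to show that $\E^{\mathrm{prim}} := \sum_{\gamma_j \in H^r(X)^{\mathrm{prim}}} \gamma_j^\vee \star \gamma_j$ also lies in $\QH^{\mathrm{res}}$. The classical ($q^0$) contribution $\sum_j \gamma_j^\vee \cup \gamma_j$ lands in $\Q \cdot \mathsf{P} \subset \mathsf{R}$, so it suffices to show that, for each effective curve class $\beta > 0$ and each primitive $\delta \in H^r(X)^{\mathrm{prim}}$, the coefficient of $\delta$ in the $q^\beta$-component of $\E^{\mathrm{prim}}$ vanishes. Unwinding the definitions using the dual basis, this coefficient equals
$$
L_\beta(\delta) \;:=\; \sum_{\gamma_j \in H^r(X)^{\mathrm{prim}}} \langle \gamma_j^\vee, \gamma_j, \delta \rangle^X_{0,\beta},
$$
a linear functional on $H^r(X)^{\mathrm{prim}}$.

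I would prove $L_\beta \equiv 0$ by a monodromy argument. Consider the parameter space $B$ of smooth complete intersections of multidegree $\m$ in $\P^{r+L}$, together with the universal family $\pi : \mathcal{X} \to B$. Gromov-Witten invariants are deformation invariant, so when evaluated on flat sections of the local system $R^r\pi_*\Q$ they yield $\pi_1(B)$-invariant multilinear forms on $H^r(X)$. The tensor $\sum_{\gamma_j \in H^r(X)^{\mathrm{prim}}} \gamma_j^\vee \otimes \gamma_j$ is the identity endomorphism of $H^r(X)^{\mathrm{prim}}$ under the Poincar\'e pairing identification, hence is fixed by any transformation preserving that pairing---in particular by monodromy, which preserves the intersection form. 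Combining these two facts, $L_\beta$ is a monodromy-invariant linear functional on $H^r(X)^{\mathrm{prim}}$.

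The main obstacle is then invoking the classical big monodromy theorem (due to Beauville) for universal families of smooth Fano complete intersections of dimension $r \geq 3$: the image of $\pi_1(B)$ in $\mathrm{GL}(H^r(X)^{\mathrm{prim}})$ is Zariski-dense in the full symplectic or orthogonal group preserving the intersection form, so the representation has no nonzero invariant vector and, dually, no nonzero invariant functional. This forces $L_\beta = 0$ and completes the proof. Once this classical input is granted the remainder of the argument is essentially formal; the care is simply in setting up the basis compatibly with the res/prim splitting so that $\sum_{\gamma_j \in H^r(X)^{\mathrm{prim}}} \gamma_j^\vee \otimes \gamma_j$ is recognized as the primitive identity tensor rather than the full diagonal.
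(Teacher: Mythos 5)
Your strategy is sound, and it is essentially the argument behind the result the paper quotes (the proof of \cite[Proposition 5.5]{BP}): split $\E$ along the orthogonal decomposition $H^r(X)=H^r(X)^{\mathrm{res}}\oplus H^r(X)^{\mathrm{prim}}$, note that the restricted summand contributes an element of $\QH^{\mathrm{res}}$ by Proposition \ref{Graber}, and kill the primitive coefficients of $\sum_{\gamma_j\in H^r(X)^{\mathrm{prim}}}\gamma_j^\vee\star\gamma_j$ by a monodromy argument, using deformation invariance of Gromov--Witten invariants together with the monodromy-invariance of the tensor $\sum_{\mathrm{prim}}\gamma_j^\vee\otimes\gamma_j$ (the Casimir of the intersection form restricted to the primitive part). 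The orthogonality of the two summands, the fact that duals stay in their summand, and the reduction to the vanishing of the invariant functional $L_\beta$ are all correct.

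The one step that fails as stated is the blanket appeal to big monodromy: it is \emph{not} true that for every Fano complete intersection with $r\geq 3$ and $m_i\geq 2$ the image of $\pi_1$ is Zariski-dense in the full symplectic or orthogonal group of $H^r(X)^{\mathrm{prim}}$. In the range considered in this paper there is exactly one exceptional family, namely $r$ even and $\m=(2,2)$ (even-dimensional intersections of two quadrics), where the algebraic monodromy group is the finite Weyl group of $D_{r+3}$ acting through its reflection representation; this is the dichotomy of \cite[Theorem 4.4.1]{Del}, and it is precisely the case singled out and treated separately in Lemma \ref{lem:monodromy argument} of the paper. So ``no nonzero invariant functional because the group is all of $\mathrm{Sp}$ or $\mathrm{O}$'' does not literally apply there, and your proof has a gap for that family. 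The repair is short: for the Weyl group of $D_{r+3}$, each reflection sends the corresponding root $v$ to $-v$, so a monodromy-invariant linear functional vanishes on all roots, which span $V^{\mathrm{prim}}$; hence $L_\beta=0$ in this case as well. (In the non-exceptional cases you do not even need Zariski density: $-\mathrm{Id}$ lies in $\mathrm{Sp}(V^{\mathrm{prim}})$ and in $\mathrm{O}(V^{\mathrm{prim}})$, which already forces any invariant linear functional to vanish.) With this supplement your argument is complete and matches the intended proof.
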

\begin{proof}
See \cite[Proof of Proposition 5.5]{BP}.
\end{proof}

By Remark \ref{rmk:basis} and Lemma \ref{E is restricted}, we can uniquely write
$$
\E= \sum_{i = 0}^{\lfloor \frac{r}{d} \rfloor} \mathrm{Coeff}(\E,q^i \H^{\star (r-id)}) q^i \H^{\star (r-id)}.
$$
where $\mathrm{Coeff}(\E,q^i \H^{\star (r-id)}) \in \Q$. Our goal is to make this coefficients explicit.

\begin{remark}
We have 
$$
\mathrm{Coeff}(\E,\H^{\star r})= \m^{-1} \sum_j \int_X \gamma_j^\vee \cup \gamma_j = \m^{-1}\sum_j (-1)^{\mathrm{deg}(\gamma_j)}= \m^{-1} \chi (X).
$$
\end{remark}
The main result of the paper is the following:

\begin{theorem}[Main Theorem]\label{Main theorem}
The following equalities hold:
\begin{itemize}
    \item if $| \m | \leq r+L-1$ then 
    $$
    \E=\m^{-1} \chi(X) \H^{\star r}+(r+L+1-| \m|-\chi(X)) \m^{\m -1}q \H^{\star |\m|-L-1},
    $$
    \item if $| \m | = r+L$ then 
    $$
    \E=\m^{-1}\chi(X) \H^{\star r}+ \sum_{j=1}^r \m^{-1}(j- \chi(X) ){r \choose {j-1}} (\m !)^{j-1} \Bigg[\m^{\m}- \frac{\m !}{j} (r+1) \Bigg] q^j \H^{\star r-j}.
    $$
\end{itemize}
\end{theorem}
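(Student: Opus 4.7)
The plan is to exploit Lemma \ref{E is restricted} together with Givental's magic relations to pin down $\E$ via a trace-type identity. By Lemma \ref{E is restricted}, Remark \ref{rmk:basis}, and the fact that every summand $\gamma^\vee \star \gamma$ contributing to $\E$ is homogeneous of total degree $2r$ in $\QH$ (with $\deg q = 2d$), I may write uniquely $\E = \sum_{i=0}^{\lfloor r/d \rfloor} c_i\, q^i\, \H^{\star (r-id)}$. The Remark preceding the theorem already fixes $c_0 = \m^{-1}\chi(X)$, so the task reduces to pinning down the remaining rational numbers $c_i$.

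First I would record the standard Frobenius-algebra identity $\int_X \E \star \alpha = \mathrm{str}(L_\alpha)$, valid for every even-degree $\alpha \in \QH$, where $L_\alpha$ is quantum multiplication by $\alpha$ on $\QH$ and $\mathrm{str}$ is the super-trace on $\QH$ as a graded $\Q[q]$-module (the grading only matters when $r$ is odd, since primitive middle cohomology then sits in odd real degree and must enter with a minus sign). Applying the identity with $\alpha = \H^{\star j}$ for $j = 1, 2, \ldots$ produces a family of linear equations for the $c_i$.

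The left-hand side $\int_X \E \star \H^{\star j}$ is computed by inserting the ansatz for $\E$, reducing each power $\H^{\star(r+k)}$ with $k \geq 1$ via the magic relation (\ref{eqn:magic1}) in Case 1, or (\ref{eqn:magic2}) in Case 2 (most naturally after passing to the shifted generator $\H' = \H + \m!\, q$, which obeys a relation of exactly the Case 1 shape, $(\H')^{\star(r+1)} = \m^{\m} q\,(\H')^{\star r}$), and then integrating; only the $\H^{\star r}$-component survives and contributes $\m$. For the right-hand side, I split $\QH = \QH^{\mathrm{res}} \oplus (H^r(X)^{\mathrm{prim}} \otimes \Q[q])$. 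On the restricted summand, the operator $L_\H$ (or $L_{\H'}$ after the Case 2 shift) is literally the companion matrix of $t^{r+1} - \m^{\m} q\, t^{r+1-d}$, giving explicit super-trace contributions from the $d$-th roots of $\m^{\m} q$ (each of algebraic multiplicity one) and from the zero eigenvalue of multiplicity $r+1-d$.

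The main obstacle is the primitive summand. The key claim I would need to establish is that $L_\H$ acts nilpotently on $H^r(X)^{\mathrm{prim}} \otimes \Q[q]$ in Case 1, and that $L_{\H + \m!\, q}$ is nilpotent on the primitive subspace in Case 2; equivalently, only the zero eigenvalue occurs there. Granting this, super-trace contributions from primitives vanish for every positive power, and the resulting linear system forces $c_i = 0$ for $i \geq 2$ in Case 1, with the single remaining equation yielding $c_1 = (d - \chi)\m^{\m-1}$. In Case 2 the same analysis, carried out in the $(\H')^{\star i}$-basis, produces only two non-vanishing coefficients there; re-expanding back into the $\H^{\star i}$-basis via the binomial theorem reorganises the answer into the stated form, with the binomial weights $\binom{r}{j-1}$ and the bracketed factor $\m^{\m} - \m!(r+1)/j$ arising naturally. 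Proving the nilpotency claim is, I expect, the heart of the argument and presumably proceeds via the algorithm for two-point Hyperplane Gromov--Witten invariants of $X$ advertised in the introduction.
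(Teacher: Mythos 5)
Your strategy (the Frobenius trace identity $\int_X \E\star\alpha=\mathrm{str}(L_\alpha)$, applied to powers of $\H$, resp.\ $\H+\m!q$, with the restricted part handled by the magic relations) is genuinely different from the paper, which instead splits $\E=\Gamma+\E'$ with $\Gamma$ the primitive part and $\E'=\m^{-1}\sum_i\H^i\star\H^{r-i}$, kills $\Gamma$'s contribution using $\H\star\Gamma=0$ (quoted from \cite{BP}), and computes $\E'$ by a direct combinatorial identity (Propositions \ref{coeff Gamma} and \ref{coeff E'}, via Proposition \ref{prop:explicit H^i}). However, your proposal leaves precisely the geometric input unproven: everything hinges on your ``key claim'' about how quantum multiplication acts on $H^r(X)^{\mathrm{prim}}\otimes\Q[q]$, and the tool you point to (the algorithm for two-point \emph{Hyperplane} Gromov--Witten invariants of Section \ref{sec: algorithm Pi}) cannot possibly prove it, since those invariants only involve restricted insertions and say nothing about the primitive subspace. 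The correct tool is the monodromy/deformation-invariance argument (Lemma \ref{lem:monodromy argument}, i.e.\ \cite[Lemma 5.2, Corollary 5.3]{BP}): in Case 1, where $\deg q=2d\geq 4$, degree reasons plus the vanishing of invariants with a single primitive insertion give the stronger statement $\H\star\gamma=0$ for every primitive $\gamma$, so that half of your plan can be completed once this input is supplied.

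In Case 2 the gap is more serious: the claim as you state it (nilpotency of $L_{\H+\m!q}$ on the primitive part) is not only unsupported, it is incompatible with the formula you are trying to prove. If the primitive contribution to $\mathrm{str}(L_{(\H+\m!q)^{\star j}})$ vanished for all $j\geq 1$, your own linear system (restricted trace $(\m^{\m}q)^j$ from the relation $(\H+\m!q)^{\star(r+1)}=\m^{\m}q(\H+\m!q)^{\star r}$, LHS computed from the ansatz for $\E$) is triangular and forces $\mathrm{Coeff}\bigl(\E,q^j(\H+\m!q)^{\star(r-j)}\bigr)=0$ for every $j\geq 2$; but the theorem, rewritten in the shifted basis (Lemma \ref{lem:E in the new basis}), asserts these coefficients equal $\m^{-1}(\m!)^{j-1}(r+1-\chi(X))(\m^{\m}-\m!)$, which is nonzero whenever $X$ has primitive cohomology. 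So the promised ``re-expansion into the $\H^{\star i}$-basis reorganising into the stated form'' cannot occur: the primitive contribution to the super-trace must actually be determined, not assumed away, and this is exactly the point where the paper instead invokes the statement $\H\star\Gamma=0$ for the single element $\Gamma=\sum_{\gamma_j\ \mathrm{prim}}\gamma_j^{\vee}\star\gamma_j$ (citing \cite[Corollary 5.3]{BP}), which is a different and weaker assertion than any eigenvalue statement about $L_{\H+\m!q}$ on individual primitive classes. As written, then, your argument proves neither bullet of the theorem: Case 1 is completable but missing its key lemma and citing the wrong mechanism for it, and Case 2 rests on a claim that, fed into your own equations, contradicts the asserted answer.
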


The case $| \m | \leq r+L-1$ in the theorem is exactly \cite[Conjecture 5.14]{BP} and is already shown to be true mod $q^2$ in \cite[Corollary 5.13]{BP}. The proof of this theorem is given in Section \ref{sec:Main theorem}.

\subsection{Application: virtual Tevelev degrees of Fano complete intersections}

\subsubsection{Virtual Tevelev degrees}\label{sec:def Tev degrees}
Let $X$ be a nonsingular, projective, algebraic variety over $\C$ of dimension $r$. Fix integers $g,n \geq 0$ satisfying the stability condition $2g-2+n >0$ and fix $\beta \in H_2(X, \Z)$ an effective curve class satisfying the condition
$$
\int_{\beta}c_1(T_X) >0.
$$
Let $\oM(X,\beta)$ be the moduli space of genus $g$, $n$-pointed stable maps to $X$ in class $\beta$ and assume that the dimensional constraint
$$
\mathrm{vdim}(\oM(X,\beta))=\mathrm{dim}(\oM \times X^n)
$$
holds. This is equivalent to
\begin{equation}\label{vTev dim constraint}
\int_{\beta}c_1(T_X)=r(n+g-1).
\end{equation}
Let
$$
\tau: \oM(X,\beta) \rightarrow \oM \times X^{n}
$$
be the canonical morphism obtained from the domain curve and the evaluation maps:
$$
\pi:\oM(X,\beta) \rightarrow \oM, \ \ \mathrm{ev}:\oM(X,\beta) \rightarrow X^n.
$$
Then the \textbf{virtual Tevelev degree} $\mathrm{vTev}^X_{g,n,\beta} \in \Q$ of $X$ is defined by the equality
$$
\tau_*[\oM(X, \beta)]^{\mathrm{vir}}= \mathrm{vTev}^X_{g,n,\beta} [\oM \times X^n] \in A^0(\oM \times X^n).
$$
Alternatively, denoting by $\Omega_{g,n,\beta}^X:H^*(X)^{\otimes n} \rightarrow H^*(\oM)$ the Gromov Witten class
$$
\Omega^X_{g,n,\beta}:(\alpha):=\pi_{*}(ev^*(\alpha) \cap [\oM (X,\beta)]^{\mathrm{vir}}),
$$
we have
$$
\mathrm{vTev}^X_{g,n,\beta} [\oM ]= \Omega_{g,n,\beta}^X( \mathsf{P}^{\otimes n}).
$$

Tevelev degrees were essentially introduced in \cite{T}, starting from which a lot of work has been done to study this degrees. In \cite{CPS} these degrees were formally defined and computed via Hurwitz theory for the case of $\P^1$; then, in \cite{FL}, using Schubert calculus the problem was posed and solved for the case of $\P^n$; in \cite{CL} a generalization of these degrees is presented for $\P^1$; in \cite{BP} a virtual perspective is adopted via Gromov-Witten theory; in \cite{LP} an equality between virtual and geometric Tevelev degrees is proven for certain Fano varieties and large degree curve classes and finally in \cite{L} geometric Tevelev degrees are computed for low degree hypersurfaces and large degree curve class via projective geometry. 

\subsubsection{Virtual Tevelev degrees of Fano complete intersections}
In this paper, we concern about exact computations of virtual Tevelev degrees of Fano complete intersections following the perspective presented in \cite{BP} and described above in Section \ref{sec:def Tev degrees}.

Let $X$ be a smooth Fano complete intersection in $\P^{r+L}$ of dimension $r \geq 3$ and vector of degrees $\m$. Writing $\beta=k \L$ with $k>0$, condition \ref{vTev dim constraint} becomes
$$
k=k[g,n]:=\frac{n+g-1}{d}r.
$$

For us, the main ingredient to computes $\vT$ will be the following result:

\begin{theorem}\label{thm:method of computation of vTev}
Suppose $k=k[g,n]$. Then
$$
\vT= \m^{1} \mathrm{Coeff}( \mathsf{P} ^{\star n} \star \E^{\star g},q^k \H^{\star r}).
$$
\end{theorem}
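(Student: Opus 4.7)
The plan is to derive Theorem \ref{thm:method of computation of vTev} from \cite[Theorem 1.4]{BP}, which for any smooth projective $X$ expresses virtual Tevelev degrees in terms of the quantum Euler class. In the notation of the present paper, that formula states
$$
\sum_{k \geq 0} q^k \cdot \mathrm{vTev}^X_{g,n,k} = \int_X \mathsf{P}^{\star n} \star \E^{\star g},
$$
so $\vT$ is exactly the coefficient of $q^k$ in the right-hand side. The proof of the BP formula itself uses the splitting (degeneration) axiom of Gromov--Witten theory: degenerate a representative of the point class in $\oM_{g,n}$ to the stratum whose domain is a rational curve with $g$ self-nodes and $n$ marked points, split at each self-node to introduce the Künneth diagonal $\sum_j \gamma_j \otimes \gamma_j^{\vee}$, and recognize that closing each node under the quantum product yields one factor of $\E$, while the $n$ original marked points contribute $\mathsf{P}^{\star n}$.

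Given this formula, the remainder is bookkeeping in the restricted quantum cohomology. Since $\mathsf{P}$ is proportional to $\H^r$, it lies in $\mathsf{R} \subset \QH^{\mathrm{res}}$, and by Lemma \ref{E is restricted} we have $\E \in \QH^{\mathrm{res}}$. Hence $\mathsf{P}^{\star n} \star \E^{\star g} \in \QH^{\mathrm{res}}$ and admits a unique expansion
$$
\mathsf{P}^{\star n} \star \E^{\star g} = \sum_{i=0}^{r} c_i(q) \H^{\star i}
$$
in the $\Q[q]$-basis $\{\H^{\star i}\}_{i=0}^{r}$ of Remark \ref{rmk:basis}.

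The key calculation is that $\int_X \H^{\star i} = \delta_{i,r} \cdot \m^{1}$ for every $i \in \{0, 1, \ldots, r\}$. Indeed, $\H^{\star i}$ is homogeneous of degree $i$ with respect to the grading $\deg \H = 1$, $\deg q = d$, so its expansion in classical cohomology takes the form $\H^{\star i} = \H^i + q A_1 + q^2 A_2 + \cdots$ where each $A_j$ has classical degree $i - jd$; for $j \geq 1$ and $i \leq r$ one has $i - jd < r$, so $\int_X A_j = 0$. Only the leading term $\H^i$ contributes, and $\int_X \H^i$ is nonzero precisely when $i = r$, in which case it equals $\m^{1}$. Consequently
$$
\int_X \mathsf{P}^{\star n} \star \E^{\star g} = \m^{1} \cdot c_r(q),
$$
and extracting the $q^k$ coefficient of both sides produces exactly $\m^{1} \cdot \mathrm{Coeff}(\mathsf{P}^{\star n} \star \E^{\star g}, q^k \H^{\star r})$, as claimed.

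The main obstacle is the BP formula itself, whose proof requires a careful analysis of the pullback of the point class in $\oM_{g,n}$ via the boundary stratification and an iterated application of the splitting axiom. Once that is granted, the present theorem reduces to the elementary coefficient extraction above, for which the restricted nature of $\E$ (Lemma \ref{E is restricted}) and the homogeneity of the basis $\{\H^{\star i}\}$ are the only inputs needed.
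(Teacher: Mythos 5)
Your proposal is correct and takes essentially the same route as the paper, whose entire proof is the citation of \cite[Theorem 1.4]{BP}; you likewise take the Buch--Pandharipande formula as the main input (your sketch of its degeneration/splitting proof matches their argument) and the rest is bookkeeping. Your translation from the integral form to the coefficient of $q^k \H^{\star r}$, via $\int_X \H^{\star i} = \delta_{i,r}\,\m^{1}$ and Lemma \ref{E is restricted}, is sound and simply makes explicit what the paper leaves to the citation (note also that by homogeneity only the single power $q^{k[g,n]}$ can occur, so your generating-function phrasing is harmless).
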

\begin{proof}
This is \cite[Theorem 1.4]{BP}.
\end{proof}

Before stating our theorem, we require a remark and some additional notation.

\begin{remark}
Given the form of Equation \ref{eqn:magic2}, when $|\m|=r+L$ it will be more convenient to use $1,(\H+\m! q),...,(\H+\m!q)^{\star r}$ instead of $1,\H,...,\H^{\star r}$ as a basis of $\QH$ as $\Q[q]$-module.
\end{remark}

\begin{definition}\label{defn:Pi and bi}
Define 
$$
\Q \ni P_i = 
\begin{cases}
\mathrm{Coeff}(\mathsf{P},q^i \H^{\star r-id}) &\text{when } | \m | \leq r+L-1,
\\  \mathrm{Coeff}(\mathsf{P},q^i (\H+\m!q)^{\star r-i}) &\text{when } | \m | = r+L,
\end{cases}
$$
for $i=0,...,\lfloor \frac{r}{d} \rfloor$ and 
$$
\Q \ni b_i = 
\begin{cases}
\mathrm{Coeff}(\mathsf{P}^{\star n} \star \E^{\star g},q^{i+k} \H^{\star r-id}) &\text{when } | \m | \leq r+L-1,
\\  \mathrm{Coeff}(\mathsf{P}^{\star n} \star \E^{\star g},q^{i+k} (\H+\m!q)^{\star r-i}) &\text{when } | \m | = r+L,
\end{cases}
$$
for $i=0,...,\lfloor \frac{r}{d} \rfloor$.
\end{definition}
Note that, by Theorem \ref{thm:method of computation of vTev}
$$
\vT= \m^1 b_0
$$
and that by Theorem \ref{Main theorem} the $b_i$'s are determined by the $P_i$'s.
\begin{definition}
Following \cite[Definition 5.15]{BP}, we define the \textbf{discrepancy} of $\mathsf{P}^{\star n} \star \E^{\star g}$ to be 
$$
\mathrm{Disc}(\mathsf{P}^{\star n} \star \E^{\star g})= \sum_{i=1}^{\lfloor \frac{r}{d} \rfloor} b_i \m^{-i \m +1}.
$$
\end{definition}

Putting all together we obtain explicit formulas for all virtual Tevelev degrees of $X$ (once all  the coefficients $P_i$ are known):

\begin{theorem}\label{vTev}
Suppose $k=k[g,n]$. Then, the virtual Tevelev degrees of $X$ are as follows:
\begin{itemize}
    \item if $| \m | \leq r+L-1$ then 
    $$
    \vT= \Bigg( \sum_{i=0}^{\lfloor \frac{r}{d} \rfloor} P_i \m^{-i \m} \Bigg)^n (r+L+1-| \m |)^g \m^{ k \m -g+1} - \mathrm{Disc}(\mathsf{P}^{\star n} \star \E^{\star g}),
    $$
    \item if $| \m | = r+L$ then 
    $$
    \vT= \Bigg( \sum_{i=0}^{r} P_i \m^{-i \m} \Bigg)^n \Bigg( 1- \m^{-r \m}(\m!)^r(r+1-\chi(X)) \Bigg)^g \m^{k \m -g+1}- \mathrm{Disc}( \mathsf{P}^{\star n} \star \E^{\star g}).
    $$
\end{itemize}
\end{theorem}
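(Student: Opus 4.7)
The plan is to reformulate the statement as a single coefficient-extraction identity and verify it via a localization argument. Combining Theorem \ref{thm:method of computation of vTev} with Definition \ref{defn:Pi and bi} gives $\vT+\mathrm{Disc}(\mathsf{P}^{\star n}\star\E^{\star g}) = \m\sum_{i\geq 0} b_i\m^{-i\m}$, so the theorem is equivalent to
\[
\m\sum_{i=0}^{\lfloor r/d\rfloor} b_i\m^{-i\m} \;=\; \hat{\mathsf{P}}^n\,F^g\,\m^{k\m-g+1},
\]
where $\hat{\mathsf{P}}:=\sum_i P_i\m^{-i\m}$ and $F$ stands for $d=r+L+1-|\m|$ in case 1 and for $1-\m^{-r\m}(\m!)^r(r+1-\chi(X))$ in case 2. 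The entire task reduces to finding a closed form for the weighted sum $\sum_i b_i\m^{-i\m}$.

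The key tool is the $\Q$-algebra homomorphism $\phi:\QH^{\mathrm{res}}\to\Q[\zeta,\zeta^{-1}]$ defined by $\H\mapsto\zeta$ (case 1) or $\H+\m!q\mapsto\zeta$ (case 2), together with $q\mapsto\m^{-\m}\zeta^d$; the magic relations \ref{eqn:magic1} and \ref{eqn:magic2} are precisely what is needed for $\phi$ to be well defined. Under $\phi$, the monomial $q^i\H^{\star(r-id)}$ maps to $\m^{-i\m}\zeta^r$, so every top-degree expansion $\alpha=\sum_i\alpha_iq^i\H^{\star(r-id)}$ (or its $\H+\m!q$ analogue) collapses to $\bigl(\sum_i\alpha_i\m^{-i\m}\bigr)\zeta^r$.

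Applying $\phi$ to the expansion $\mathsf{P}^{\star n}\star\E^{\star g} = \sum_i b_iq^{k+i}\H^{\star(r-id)}$ (or its case 2 analogue) and using multiplicativity yields
\[
\m^{-k\m}\Bigl(\sum_i b_i\m^{-i\m}\Bigr)\zeta^{r+kd} \;=\; \phi(\mathsf{P})^n\,\phi(\E)^g \;=\; \hat{\mathsf{P}}^n\,\hat{\E}^g\,\zeta^{r(n+g)},
\]
where $\hat{\E}\in\Q$ is defined by $\phi(\E)=\hat{\E}\zeta^r$. The dimensional constraint $r(n+g-1)=kd$ gives $\zeta^{r(n+g)}=\zeta^{r+kd}$, so cancellation yields $\sum_i b_i\m^{-i\m} = \hat{\mathsf{P}}^n\hat{\E}^g\m^{k\m}$; the theorem is therefore equivalent to the single scalar identity $\m\hat{\E}=F$.

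It remains to compute $\hat{\E}$ from the explicit formula of Theorem \ref{Main theorem}. In case 1 this is an elementary cancellation: the two summands of $\E$ contribute $\m^{-1}\chi(X)$ and $(d-\chi(X))\m^{-1}$ respectively to $\hat{\E}$, giving $\hat{\E}=d\m^{-1}$. In case 2, setting $u:=\m!/\m^\m$ one finds $\phi(\H)=(1-u)\zeta$ and $\phi(q^j\H^{\star(r-j)})=\m^{-j\m}(1-u)^{r-j}\zeta^r$, so, after substituting the formula for $\E$ and using ${r\choose j-1}(r+1)/j = {r+1\choose j}$, the claim $\m\hat{\E}=1-u^r(r+1-\chi(X))$ reduces to the polynomial identity
\[
\chi(X)(1-u)^r + \sum_{j=1}^{r}(j-\chi(X))\Bigl[{r\choose j-1}u^{j-1} - {r+1\choose j}u^j\Bigr](1-u)^{r-j} = 1 - u^r(r+1-\chi(X)).
\]
This follows by expanding both sides in the $\{u^k\}$ basis, using $k{r\choose k}=r{r-1\choose k-1}$ together with the summations $\sum_{k=0}^{r-1}{r\choose k}u^k(1-u)^{r-1-k}=(1-u^r)/(1-u)$ and $\sum_{j=0}^r{r+1\choose j}u^j(1-u)^{r-j}=(1-u^{r+1})/(1-u)$; the intermediate coefficients for $1\leq k\leq r-1$ cancel, leaving only the $u^0$ and $u^r$ contributions. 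This combinatorial verification in case 2 is the main technical obstacle; every other step is a formal manipulation.
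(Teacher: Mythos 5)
Your argument is correct, but it is organized differently from the paper's. The paper handles the case $|\m|\leq r+L-1$ by quoting \cite[Proposition 5.16]{BP} together with Theorem \ref{Main theorem}, and in the case $|\m|=r+L$ it first re-expands $\E$ in the basis $1,\H+\m!q,\dots,(\H+\m!q)^{\star r}$ (Lemma \ref{lem:E in the new basis}, where the combinatorial identities \ref{eqn:comb1} and \ref{eqn:comb2} enter), deduces Corollary \ref{cor: E is simple}, and finally multiplies $\mathsf{P}^{\star n}\star\E^{\star g}$ by $(\H+\m!q)^{\star r}$ and compares coefficients. Your homomorphism $\phi$ is the same collapsing device as that last multiplication step, but you bypass the change-of-basis lemma entirely: you evaluate the expression of Theorem \ref{Main theorem} directly, which relocates all the combinatorics into a single polynomial identity in $u=\m!/\m^{\m}$. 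That identity is true, and your toolkit suffices: splitting it into the part linear in $\chi(X)$ and the remaining part, both reduce to the two summation formulas you quote, e.g. $\sum_{j=1}^r j\binom{r+1}{j}u^j(1-u)^{r-j}=(r+1)u(1-u^r)/(1-u)$ via $j\binom{r+1}{j}=(r+1)\binom{r}{j-1}$, so the verification closes (I checked it; it also agrees with small $r$). Your route has the merit of treating both cases uniformly and of making the case $|\m|\leq r+L-1$ self-contained rather than relying on \cite{BP}; what it does not produce are the explicit expansion of $\E$ in the shifted basis and Corollary \ref{cor: E is simple}, which the paper obtains as byproducts. One point you should make explicit: well-definedness of $\phi$ requires not just that the images of $\H$ (resp. $\H+\m!q$) and $q$ satisfy the magic relation, but that this relation actually presents $\QH^{\mathrm{res}}$ over $\Q[q]$; this follows from Remark \ref{rmk:basis}, since the relation is monic of degree $r+1$ in $\H$ (resp. $\H+\m!q$), so $\Q[q][x]$ modulo the relation is free with basis $1,x,\dots,x^r$ and maps this basis onto the basis $1,\H,\dots,\H^{\star r}$ (resp. its shifted analogue), hence is an isomorphism. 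With that remark added, your proof is complete.
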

The case $|\m| \leq r+L-1$ already appears in \cite[Proposition 5.16]{BP} (where they assume that \cite[conjecture 5.14]{BP} holds for $X$), the case $|\m|=r+L$ is instead completely new. 

The last question would be if we can actually express the coefficients $P_i$ appearing in Theorem \ref{vTev} in a closed formula obtaining in this way a closed formula for the virtual Tevelev degrees.
\\ Partial results have been obtained in \cite{BP}, where they gave a complete answer in the following cases:
\begin{itemize}
\item for quadric hypersurfaces (see \cite[Theorem 1.5 and Example 2.4]{BP};
\item for low degree complete complete intersections $r > 2 | \m | -2L-2$ which are not quadrics (see \cite[Corollary 5.11 and Theorem 5.19]{BP});
\item for the border case $r=2 | \m | -2L-2$ (see \cite[Lemma 5.21 and Corollary 5.23]{BP}).
\end{itemize}
Here we will content ourselves with illustrating in Section \ref{sec: algorithm Pi} an algorithm that calculates all the coefficients $P_i$'s. It should be noted here that the method we will describe is more effective than the general result in \cite{ABPZ}, where they deal with Gromov-Witten invariants in all genera with arbitrary insertions.



\subsection*{Acknowledgments} I am especially thankful to my supervisor R. Pandharipande who introduced me to the topic of this paper, explained me some parts of the papers \cite{BP} and \cite{P} and read very carefully the first draft of this paper. I also would like to thank S. Molcho for reading and commenting the final draft of the paper,  C. Lian and J. Schmitt for several disscussions about Tevelev degrees and Younghan Bae for discussions about the content of Section \ref{sec: algorithm Pi}. I am supported by the grant SNF-200020-182181.

\section{A preliminary computation}

We start with expressing $\H^i$ for $i=1,...,r$ as a linear combination of $1,...,\H^{\star r}$ with coefficients in $\Q [q]$.

The following notation will be convenient. For $k \geq 0$ and $0 \leq j \leq r$ let
$$ 
\alpha^{k}_{r-j}:= \m ^{-1}\langle \H^{kd+j-1},\H^{r-j}\rangle^X_{0,k}= \m^{-1}\int_{[\overline{\mathcal{M}}_{0,2}(X,k \L)]^{\mathrm{vir}}} \mathrm{ev_1}^{*} \H^{kd+j-1} \cup \mathrm{ev}_2^{*} \H^{r-j}
$$
where $\L$ is the class of a line in $X$ and $\mathrm{ev}_i:\overline{\mathcal{M}}_{0,2}(X,k \L) \rightarrow X$ are the evaluation maps for $i=1,2$. Note the following symmetry:
$$
\alpha^{k}_{r-j}=\alpha^{k}_{kd+j-1}.
$$

\begin{proposition} \label{prop:explicit H^i}
Let $ 0 \leq i \leq r$. Then, for $1 \leq j \leq \lfloor \frac{i}{d} \rfloor$, we have 
\begin{equation*}
\begin{split}
\mathrm{Coeff}(\H^i,q^j & \H^{\star (i-jd)})= \\
& =\sum_{\ell: 1 \leq \ell \leq j} (-1)^{\ell } \sum_{\substack{(i_1,...,i_{\ell}) \in \Z_{\geq 1}: \\ i_1+...+i_{\ell}=j}} \sum_{\substack{(u_1,...,u_{\ell}) \in (\Z_{\geq 0})^{\times \ell}: \\ 0 \leq u_{\ell} \leq ... \leq u_1 \leq i-jd}} \prod_{a=1}^{\ell} i_a \alpha^{i_a}_{r-(j-i_1-...-i_a)d-u_a}.
\end{split}
\end{equation*}
\end{proposition}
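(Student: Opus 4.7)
The plan is to prove the formula by induction on $i$, through a scalar recursion for the coefficients $c_{i,j} := \mathrm{Coeff}(\H^i, q^j \H^{\star(i-jd)})$. First I would establish the recursion at the level of classes. Using the definition of the quantum product and the divisor equation for $\H$, one computes
$$
\H \star \H^i = \H^{i+1} + \sum_{k \geq 1} k q^k \sum_j \langle \H^i, \gamma_j^\vee \rangle^X_{0,k}\, \gamma_j.
$$
By the same mechanism as in Proposition \ref{Graber} and Lemma \ref{E is restricted}, two-point Gromov--Witten invariants with restricted insertions land in the restricted cohomology, so the dimension constraint on $\overline{\mathcal{M}}_{0,2}(X, k\L)$ forces the inner sum to be a scalar multiple of $\H^{1+i-kd}$. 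Pairing against the dual class $(\H^{1+i-kd})^{\vee} = \m^{-1} \H^{r-1-i+kd}$ and invoking the symmetry $\alpha^k_{r-j} = \alpha^k_{kd+j-1}$, the coefficient is identified as $\alpha^k_i$. Hence
$$
\H^{i+1} = \H \star \H^i - \sum_{k \geq 1} k q^k \alpha^k_i \, \H^{1+i-kd}.
$$

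Next I would convert this class identity into a scalar recursion. Since $\H \star \H^{\star m} = \H^{\star(m+1)}$ and $c_{*,0}=1$ (from $\H^i \equiv \H^{\star i} \pmod q$), substituting the inductive expansions of $\H^i$ and $\H^{1+i-kd}$ in the basis $\{\H^{\star s}\}$ and reading off the coefficient of $q^j \H^{\star(i+1-jd)}$ gives
$$
c_{i+1,\,j} \;=\; c_{i,\,j} \;-\; \sum_{k=1}^{j} k\, \alpha^k_i \, c_{i+1-kd,\, j-k}.
$$
The base case $i < d$ is vacuous, since then $\H^i = \H^{\star i}$ and no $j \geq 1$ term exists.

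It remains to verify that the proposed closed form satisfies this recursion. Since the formula depends on $i$ only through the upper bound $u_1 \leq i - jd$, the difference $c_{i+1,j} - c_{i,j}$ is the sum of boundary terms with $u_1 = i+1-jd$. Peeling off the $a=1$ factor, the symmetry $\alpha^k_{r-\bullet} = \alpha^k_{kd+\bullet-1}$ converts $\alpha^{i_1}_{r-(j-i_1)d-(i+1-jd)}$ into $\alpha^{i_1}_i$; setting $k = i_1$ and re-indexing $(i_a, u_a)_{a \geq 2}$, the constraint $u_2 \leq i+1-jd$ becomes precisely $u_2 \leq (i+1-kd) - (j-k)d$, which is the upper bound appearing in $c_{i+1-kd,\,j-k}$. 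The surviving inner sum reproduces $-c_{i+1-kd,\,j-k}$, the extra minus sign coming from $(-1)^\ell$ versus $(-1)^{\ell-1}$; summing over $k=1,\dots,j$ (and separating $\ell=1$ for $k=j$, which uses $c_{i+1-jd,0}=1$) yields the desired recursion. The main obstacle is precisely this combinatorial bookkeeping of the nested $u_a$-indices; the geometric input is standard once the restricted-cohomology property of two-point invariants is invoked.
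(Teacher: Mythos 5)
Your proposal is correct and takes essentially the same route as the paper: induction on $i$ via the recursion $\mathrm{Coeff}(\H^{i+1},q^j\H^{\star(i+1-jd)})=\mathrm{Coeff}(\H^{i},q^j\H^{\star(i-jd)})-\sum_{k=1}^j k\,\alpha^k_i\,\mathrm{Coeff}(\H^{i+1-kd},q^{j-k}\H^{\star(i+1-jd)})$, obtained from the divisor equation together with the restrictedness of $\H\star\H^{i}$ (Proposition \ref{Graber}), which kills the primitive contributions. Your check that the closed form satisfies the recursion by isolating the boundary terms with $u_1=i+1-jd$ is only a mild reorganization of the paper's substitution of the inductive expressions, so the two arguments coincide in substance.
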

\begin{proof}
We proceed by induction on $i$. 
\\When $i<d$ there is nothing to prove. When $i=d$, it must be $j=1$ and the right-hand side of the equation is just $\alpha^{1}_{r}$. To compute the left-hand side note that $\H^i= \H^{\star i}$ for $i<d$ and thus 
$$
\H^{\star d}=\H \star \H^{d-1}= \H^d +q \langle 
\H,\H^{d-1},\H^r\rangle^X_{0,1} \m^{-1}.
$$
Note that, since $\H \star \H^{d-1} \in \QH ^{\mathrm{res}}$, the primitive cohomology contributions in $\H \star \H^{d-1}$ is $0$. 
\\Use now the divisor equation in Gromov-Witten theory to obtain
$$
\langle \H,\H^{d-1},\H^r\rangle^X_{0,1} \m^{-1}= \alpha^{1}_r.
$$
Assume now that the Theorem is true for $i=d,...,t-1 < r$. 
\\Write 
$$
\H \star \H^{t-1}=\H^t+ \sum_{k=1}^{\lfloor \frac{t}{d} \rfloor} \langle\H,\H^{t-1},\H^{kd+r-t}\rangle^{X}_{0,k}\m^{-1}q^k \H^{t-kd}.
$$
where again the primitive cohomology contributions in $\H \star \H^{t-1}$ is $0$ and by the divisor equation 
$$
\m^{-1}\langle \H,\H^{t-1},\H^{kd+r-t} \rangle^{X}_{0,k}=k \alpha^{k}_{r-(t-kd)}.
$$
Note now that, by induction, we know how to write the $\H^{t-kd}$ and $\H^{t-1}$ in terms of $1,\H,...,\H^{\star r}$.
Putting all together we obtain for $1 \leq j \leq \lfloor \frac{t}{d} \rfloor$ 
\begin{equation*}
\begin{split}
\mathrm{Coeff}(\H^t,q^j \H^{\star (t-jd)}) =&\mathrm{Coeff}(\H^{t-1}, q^j \H^{\star (t-1-jd)})-\sum_{k=1}^j k \alpha^{k}_{r-(t-kd)} \mathrm{Coeff}(\H^{t-kd},q^{j-k} \H^{\star (t-jd)})
\\ =& \sum_{\ell=1}^{j} (-1)^{\ell } \sum_{\substack{i_1+...+i_{\ell}=j, \\ 0 \leq u_{\ell} \leq ... \leq u_1 \leq t-1-jd}} \prod_{a=1}^\ell  i_a \alpha^{i_a}_{r-(j-i_1-...-i_a)d-u_a} \\
-&\sum_{k=1}^j k \alpha^{k}_{r-(t-kd)} \Bigg( \sum_{\ell=0}^{j-k}(-1)^{\ell } \sum_{\substack{i_1+...+i_{\ell}=j-k, \\ 0 \leq u_\ell \leq ... \leq u_1 \leq t-jd}} \prod_{a=1}^\ell  i_a \alpha^{i_a}_{r-(j-k-i_1-...-i_a)d-u_a} \Bigg)
\end{split}
\end{equation*}
and, since 
$$
r-(t-kd)=r-(j-k)d - (t-jd),
$$
this is exactly the right-hand side appearing in the statement above with $i=t$.
\end{proof}
\begin{corollary} 
For $ 1 \leq j \leq \lfloor \frac{r}{d} \rfloor$ we have 
\begin{equation}\label{eqn:Compuatation of Hr+1}
\mathrm{Coeff}(\H^{\star r+1},q^j \H^{\star r+1-jd})=\sum_{\ell: 1 \leq \ell \leq j} (-1)^{\ell+1} \sum_{\substack{(i_1,...,i_{\ell}) \in \Z_{\geq 1}: \\ i_1+...+i_{\ell}=j}} \sum_{\substack{(u_1,...,u_{\ell}) \in (\Z_{\geq 0})^{\times \ell}: \\ 0 \leq u_{\ell} \leq ... \leq u_1 \leq r+1-jd}} \prod_{a=1}^{\ell} i_a \alpha^{i_a}_{r-(j-i_1-...-i_a)d-u_a}.
\end{equation}
 
\end{corollary}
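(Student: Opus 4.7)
My plan is to extend the inductive computation in the proof of Proposition \ref{prop:explicit H^i} formally to the index $i=r+1$, and then exploit the vanishing of $\H^{r+1}$ in $H^{*}(X)$ to solve for $\H^{\star(r+1)}$.

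The inductive step in the proof of Proposition \ref{prop:explicit H^i} is the identity
$$\H\star\H^{t-1}=\H^{t}+\sum_{k=1}^{\lfloor t/d\rfloor} k\,\alpha^{k}_{r-(t-kd)}\,q^{k}\,\H^{t-kd},$$
obtained from the divisor equation, with the absence of primitive contributions guaranteed by Proposition \ref{Graber}. This identity continues to make sense at $t=r+1$: indeed $\H^{r}\in\mathsf{R}$ ensures $\H\star\H^{r}\in\QH^{\mathrm{res}}$, and every exponent $r+1-kd$ appearing on the right is at most $r$, hence already handled by Proposition \ref{prop:explicit H^i}. Running the same algebraic manipulation as in the proof of the proposition then produces an expansion
$$\H^{r+1}=\H^{\star(r+1)}+\sum_{j\ge 1} C_{j}\,q^{j}\,\H^{\star(r+1-jd)},$$
where each $C_{j}$ is given by the formula of Proposition \ref{prop:explicit H^i} with $i$ replaced by $r+1$; in particular the upper bound on $u_{1}$ becomes $r+1-jd$, matching the statement of the corollary.

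Since $\dim_{\C}X=r$, the classical power $\H^{r+1}$ vanishes in $H^{*}(X)$ and therefore in $\QH$. Setting the left-hand side of the expansion above to zero and isolating $\H^{\star(r+1)}$ multiplies every $C_{j}$ by $-1$, so the sign $(-1)^{\ell}$ from Proposition \ref{prop:explicit H^i} becomes $(-1)^{\ell+1}$, exactly reproducing Equation \ref{eqn:Compuatation of Hr+1}.

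I expect no serious obstacle beyond careful bookkeeping: one must track that the inner sums come from the $i=r+1$ instance of Proposition \ref{prop:explicit H^i} (so that $u_{1}\le r+1-jd$ rather than $r-jd$), and verify at each step that multiplication by $\H$ preserves the restricted ring $\QH^{\mathrm{res}}$, which is Proposition \ref{Graber}. The rest is a mechanical rerun of the proposition's induction together with the single new input $\H^{r+1}=0$.
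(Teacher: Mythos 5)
Your proposal is correct and is essentially the paper's own argument: the paper likewise "proceeds as in the proof of Proposition \ref{prop:explicit H^i}," writing $\H^{\star (r+1)}=\H\star\H^{\star r}$, using the divisor-equation identity at $t=r+1$ (where the vanishing of the classical power $\H^{r+1}$ removes the leading term, producing exactly your sign flip $(-1)^{\ell}\mapsto(-1)^{\ell+1}$), and re-running the same bookkeeping with the identity $kd-1=r-(j-k)d-(r+1-jd)$ so that the bound on $u_1$ becomes $r+1-jd$. Your framing as a formal extension of the proposition to $i=r+1$ followed by imposing $\H^{r+1}=0$ is just a repackaging of that computation, so there is nothing substantive to add.
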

\begin{proof}
Proceeding as in the proof of Proposition \ref{prop:explicit H^i}, we write $\H^{\star r+1}=\H \star \H^{\star r}$ and
$$
\H^{\star r}= \H^r - \sum_{j=1}^{\lfloor \frac{r}{d} \rfloor} \mathrm{Coeff}(\H^r,q^j \H^{\star r-jd}) q^j \H^{\star r-jd}.
$$
Since 
$$
\H \star \H^r= \sum_{k=1}^{\lfloor \frac{r+1}{d} \rfloor} k \alpha^{k}_{kd-1} q^k \H^{r-kd+1},
$$
we have 
\begin{equation*}
\begin{split}
\mathrm{Coeff}(\H^{\star (r+1)},q^j \H^{\star (r+1-jd)}) &=-\mathrm{Coeff}(\H^{r}, q^j \H^{\star (r-jd)})
\\&+\sum_{k=1}^j k \alpha^{k}_{r-(r+1-kd)} \mathrm{Coeff}(\H^{r+1-kd},q^{j-k} \H^{\star r+1-jd})\\
&=\sum_{1 \leq \ell \leq j}(-1)^{\ell+1} \sum_{\substack{i_1+...+i_{\ell}=j, \\ 0 \leq u_\ell \leq ... \leq u_1 \leq r-jd}} \prod_{a=1}^\ell i_a \alpha^{i_a}_{r-(j-i_1-...-i_a)d-u_a}\\
&- \sum_{k=1}^j k \alpha^{k}_{kd-1} \sum_{1 \leq \ell \leq j-k}(-1)^{\ell+1} \sum_{\substack{i_1+...+i_{\ell}=j-k, \\ 0 \leq u_\ell \leq ... \leq u_1 \leq r+1-jd}}\prod_{a=1}^\ell i_a \alpha^{i_a}_{r-(j-k-i_1-...-i_a)d-u_a}
\end{split}
\end{equation*}
for $j=1,...,\lfloor \frac{r}{d} \rfloor$. Finally, since $kd-1=r-(j-k)d-(r+1-jd)$, we are done.
\end{proof}
Using the previous Corollary and Equations \ref{eqn:magic1} and \ref{eqn:magic2} we get interesting equalities.

\section{Proof of the main theorem}\label{sec:Main theorem}

\subsection{Plan of the Proof }
In this subsection we explain how the proof of Theorem \ref{Main theorem} goes. 

Define 
$$
\Gamma:= \sum_{j: \gamma_j \in H^r(X)^{\mathrm{prim}}} \gamma_j^\vee \star \gamma_j
$$
and 
$$
\E':=\m^{-1} \sum_{i=0}^r \H^i \star \H^{r-i}
$$
Then 
$$
\E=\Gamma + \E'
$$
and so by Lemma \ref{E is restricted}, we see that $\Gamma \in QH^*(X)^{\mathrm{res}}$.

Using relations \ref{eqn:magic1} and \ref{eqn:magic2}, the proof of Theorem \ref{Main theorem} becomes an easy algebraic count (done in Section \ref{sec:Computation of E}) once we know the following two propositions.

\begin{proposition} \label{coeff Gamma}
For $j=1,...,{\lfloor \frac{r}{d} \rfloor}$ we have
$$
\mathrm{Coeff}(\Gamma,q^j \H^{\star r-jd})=\m^{-1}(r+1- \chi(X)) \mathrm{Coeff}(\H^{\star r+1},q^j \H^{\star r+1-jd}).
$$
\end{proposition}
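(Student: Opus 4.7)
My plan is to reformulate Proposition \ref{coeff Gamma} as the single identity $\H\star\Gamma=0$ in $QH^*(X)$, and then establish this identity by exploiting the vanishing of 2-point Gromov-Witten invariants coupling primitive to restricted insertions.

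Expanding $\Gamma=\sum_{j\ge 0}F_j\,q^j\H^{\star(r-jd)}$ and $\H^{\star(r+1)}=\sum_{j\ge 1}G_j\,q^j\H^{\star(r+1-jd)}$ (the latter expansion lacks a constant term thanks to the magic relations \ref{eqn:magic1} and \ref{eqn:magic2}), I first compute $F_0$ directly from the classical limit: modulo $q$, $\Gamma\equiv\sum_{\gamma\in\text{prim basis}}\gamma^\vee\cup\gamma$; each summand equals $(-1)^r\mathsf{P}$, and since $\dim H^r(X)^{\mathrm{prim}}=(-1)^r(\chi(X)-(r+1))$, I obtain $F_0=\m^{-1}(\chi(X)-(r+1))$. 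Multiplying the expansion of $\Gamma$ by $\H$ then yields
\[
\H\star\Gamma=F_0\,\H^{\star(r+1)}+\sum_{j\ge 1}F_j\,q^j\H^{\star(r+1-jd)}=\sum_{j\ge 1}(F_0G_j+F_j)\,q^j\H^{\star(r+1-jd)},
\]
so the desired identity $F_j=\m^{-1}(r+1-\chi(X))G_j=-F_0G_j$ is equivalent to $\H\star\Gamma=0$.

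To establish $\H\star\Gamma=0$, I use commutativity to write $\H\star\Gamma=\sum_{\gamma}(\H\star\gamma^\vee)\star\gamma$. Since $\H\cup\gamma^\vee=0$ for primitive $\gamma^\vee$, the divisor equation yields
\[
\H\star\gamma^\vee=\sum_{k\ge 1}k\sum_\ell\langle\gamma^\vee,\gamma_\ell^\vee\rangle^X_{0,k\L}\,q^k\gamma_\ell.
\]
The terms with restricted $\gamma_\ell$ vanish: by Proposition \ref{Graber}, the product $\H^a\star\H=\H^{a+1}$ remains in $QH^{\mathrm{res}}$, so its coefficient against any primitive class $\gamma^\vee$ is zero, yielding $\langle\H^a,\H,\gamma^\vee\rangle^X_{0,k\L}=0$, and the divisor equation then forces $\langle\H^a,\gamma^\vee\rangle^X_{0,k\L}=0$. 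The only potentially surviving terms are primitive-primitive 2-point invariants $\langle\gamma^\vee,\gamma_\ell^\vee\rangle^X_{0,k\L}$ with both insertions in degree $r$, and the dimensional constraint forces $kd=1$. Hence for $d\ge 2$ (the range $|\m|\le r+L-1$), no such $k$ exists and $\H\star\gamma^\vee=0$ term by term, giving $\H\star\Gamma=0$ at once.

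The main obstacle is the borderline case $d=1$ (i.e.\ $|\m|=r+L$), in which $\H\star\gamma^\vee=q\,\phi(\gamma^\vee)$ for a nontrivial operator $\phi$ on $H^r(X)^{\mathrm{prim}}$ coming from the primitive-primitive 2-point invariants on lines. Iterating gives $\H^{\star k}\star\gamma^\vee=q^k\phi^k(\gamma^\vee)$, and substituting into the magic relation \ref{eqn:magic2} produces the operator identity $(\phi+\m!\,I)^{r+1}=\m^{\m}(\phi+\m!\,I)^r$ on primitive cohomology, which severely constrains the spectrum of $\phi$. The delicate final step is to combine this structural identity with the symmetry of 2-point invariants to show that $\sum_{\gamma}\phi(\gamma^\vee)\star\gamma=0$ in $QH^*(X)$ — a cancellation that must occur component by component between the restricted and primitive parts of doubly-primitive quantum products.
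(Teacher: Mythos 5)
Your opening reduction is sound, and it is in fact the same mechanism as the paper's: expanding $\Gamma$ in the restricted basis, computing the leading coefficient from the classical limit, and multiplying by $\H$ is precisely Lemma \ref{product with H} combined with the mod-$q$ computation in the paper's proof. One point you pass over: writing $\Gamma=\sum_j F_j q^j\H^{\star(r-jd)}$ presupposes $\Gamma\in QH^*(X)^{\mathrm{res}}$, which is not automatic (a priori $\gamma^\vee\star\gamma$ has primitive components, since three-point invariants with three primitive insertions need not vanish); the paper obtains it from $\Gamma=\E-\E'$ together with Lemma \ref{E is restricted} and Proposition \ref{Graber}, and you need the same remark. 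Your verification of $\H\star\Gamma=0$ when $d\ge 2$ (equivalently $|\m|\le r+L-1$) is correct and pleasantly self-contained: the vanishing of restricted--primitive two-point invariants via Proposition \ref{Graber} plus the divisor equation, together with the dimension count forcing $kd=1$ for two primitive insertions, kills $\H\star\gamma^\vee$ term by term, replacing the paper's citation of \cite[Corollary 5.3]{BP} in that range.

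The genuine gap is the case $d=1$, i.e.\ $|\m|=r+L$, which the Proposition asserts and which the paper needs for the second half of Theorem \ref{Main theorem}. There your argument is not completed: you record the operator identity $(\phi+\m!\,\mathrm{Id})^{r+1}=\m^{\m}(\phi+\m!\,\mathrm{Id})^{r}$ and assert that, combined with ``symmetry of 2-point invariants'', the cancellation $\sum_\gamma\phi(\gamma^\vee)\star\gamma=0$ ``must occur''; neither ingredient delivers it, and the deferred step is not a routine check. Indeed, deformation invariance makes $\phi$ equivariant for the monodromy action, and since $H^r(X)^{\mathrm{prim}}$ is an irreducible representation of the algebraic monodromy group (Deligne, exactly as in Lemma \ref{lem:monodromy argument}), Schur's lemma forces $\phi=c\,\mathrm{Id}$; your operator identity then gives $c=-\m!$ or $c=\m^{\m}-\m!$, both nonzero, while the graded symmetry of $\langle\gamma_j^\vee,\gamma_\ell^\vee\rangle^X_{0,1}$ and the graded commutativity of $\star$ only make the double sum $\sum_{j,\ell}\langle\gamma_j^\vee,\gamma_\ell^\vee\rangle^X_{0,1}\,\gamma_\ell\star\gamma_j$ symmetric under relabelling, not zero. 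Consequently $\sum_\gamma\phi(\gamma^\vee)\star\gamma=c\,\Gamma$, so the vanishing you need amounts to $c\,\Gamma=0$, which cannot be extracted ``component by component'' from the inputs you list and sits in visible tension with $\Gamma\equiv(\chi(X)-r-1)\mathsf{P}$ mod $q$. As written, therefore, your proposal proves the Proposition only for $|\m|\le r+L-1$; the index-one case requires a genuinely different input, which the paper supplies by importing the identity $\H\star\Gamma=0$ from \cite[Corollary 5.3]{BP}, whose proof is asserted there to extend to $|\m|=r+L$.
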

The proof is presented in Section \ref{sec:computation Gamma}.

\begin{proposition}\label{coeff E'}
For $j=1,...,{\lfloor \frac{r}{d} \rfloor}$ we have
$$
\mathrm{Coeff}(\E',q^j \H^{\star r-jd})= - \m ^{-1}(r-jd+1) \mathrm{Coeff}(\H^{\star r+1},q^j \H^{\star r+1-jd}).
$$
\end{proposition}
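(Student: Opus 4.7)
The plan is to exploit the fact that the restricted quantum cohomology $\QH^{\mathrm{res}}$ is a Frobenius algebra over $\Q[q]$, isomorphic to $\Q[q][y]/\langle p(y)\rangle$ for a monic polynomial $p$ of degree $r+1$ coming from Givental's magic relation; the classical Frobenius-algebra identity then gives $\E' = \m^{-1} p'(\H)$ in closed form, reducing the proposition to a coefficient check.

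Concretely, I would take $p(\H) = \H^{\star(r+1)} - \m^\m q\,\H^{\star(|\m|-L)}$ when $|\m| \leq r+L-1$, and $p(\H) = (\H+\m!q)^{\star(r+1)} - \m^\m q\,(\H+\m!q)^{\star r}$ when $|\m| = r+L$. In both cases $p$ is monic of degree $r+1$, the relation $p(\H)=0$ in $\QH^{\mathrm{res}}$ is exactly Equation \ref{eqn:magic1} or \ref{eqn:magic2}, and by Remark \ref{rmk:basis} the induced surjection $\Q[q][y]/\langle p(y)\rangle \to \QH^{\mathrm{res}}$ is an isomorphism of $\Q[q]$-modules (both are free of rank $r+1$). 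The Frobenius form is $\phi(a) := \int_X a$, and it satisfies $\phi(\H^{\star i}) = \delta_{i,r}\,\m$ for $0\leq i\leq r$, since any $q$-correction to $\H^{\star i}$ has cohomological degree strictly below $2r$ and so contributes nothing.

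Next I would invoke the standard Frobenius-algebra fact: for $A = R[y]/\langle p(y)\rangle$ with Frobenius form satisfying $\phi(y^i) = \delta_{i,r}\,\mu$, the Euler element $\sum_j e_j \star e_j^\vee$ (independent of basis) equals $\mu^{-1} p'(y)$. A clean proof uses the dual-basis formula $e_j^\vee = \mu^{-1}[z^j]\frac{p(z)-p(y)}{z-y}$, which yields $\sum_j y^j e_j^\vee = \mu^{-1}\left.\frac{p(z)-p(y)}{z-y}\right|_{z=y} = \mu^{-1} p'(y)$. Applied to the basis $\{\H^i\}_{i=0}^r$ of $\QH^{\mathrm{res}}$ with Poincaré dual $\{(\H^i)^\vee = \m^{-1}\H^{r-i}\}$, this identifies the Euler element with $\m^{-1}\sum_i \H^i \star \H^{r-i} = \E'$, giving
$$\E' \;=\; \m^{-1}\, p'(\H).$$

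Finally, the proposition follows by direct coefficient extraction. When $|\m|\leq r+L-1$, one has $p'(\H) = (r+1)\H^{\star r} - (|\m|-L)\m^\m q\,\H^{\star(r-d)}$ and $\H^{\star(r+1)}=\m^\m q\,\H^{\star(r+1-d)}$, so both sides of the claimed identity vanish for $j\geq 2$ and agree at $j=1$ via the relation $|\m|-L = r+1-d$. When $|\m|=r+L$, setting $y:=\H+\m!q$ gives $\E' = \m^{-1}(r+1)y^{\star r} - \m^{\m-1}r\,q\,y^{\star(r-1)}$, and expanding both $\E'$ and $\H^{\star(r+1)} = (y-\m!q)^{\star(r+1)}$ (after reducing via $y^{\star(r+1)} = \m^\m q\,y^{\star r}$) by the binomial theorem in the basis $\{q^j\H^{\star(r-j)}\}$ reduces the proposition to the elementary identities $(r+1)\binom{r}{j} = (r-j+1)\binom{r+1}{j}$ and $r\binom{r-1}{j-1} = (r-j+1)\binom{r}{j-1}$. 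The only substantive step is the Frobenius-algebra Euler lemma in the third paragraph; once that is in hand, everything else is mechanical.
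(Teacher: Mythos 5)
Your proposal is correct, but it proves the statement by a genuinely different route than the paper. The paper expands each $\H^i\star\H^{r-i}$ in the basis $\{q^j\H^{\star(r-jd)}\}$ using the explicit combinatorial formula of Proposition \ref{prop:explicit H^i} (sums of products of two-point invariants $\alpha^k_{r-j}$), and then proves Lemma \ref{lem:computation E'} by counting how many times each monomial occurs in $\sum_i \mathrm{Coeff}(\H^i\star\H^{r-i},q^j\H^{\star(r-jd)})$, finding multiplicity $r-jd+1$; the paper explicitly remarks that this argument is purely algebraic and asks for a more conceptual explanation. Your argument supplies exactly that: $\E'$ is the characteristic (handle) element of the Frobenius algebra $\QH^{\mathrm{res}}\cong\Q[q][y]/\langle p(y)\rangle$, since $\{\H^i\}_{i=0}^r$ is a $\Q[q]$-basis with Poincar\'e-dual basis $\{\m^{-1}\H^{r-i}\}$ and the quantum Frobenius pairing is the classical one ($\phi(a\star b)=\int_X a\cup b$ by the fundamental class axiom); basis-independence of the handle element plus the classical dual-basis identity $e_j^\vee=\mu^{-1}[z^j]\frac{p(z)-p(y)}{z-y}$ then give the closed formula $\E'=\m^{-1}p'(\H)$, and the stated coefficient identity follows from the magic relations \ref{eqn:magic1}--\ref{eqn:magic2} together with the elementary identities $(r+1)\binom{r}{j}=(r-j+1)\binom{r+1}{j}$ and $r\binom{r-1}{j-1}=(r-j+1)\binom{r}{j-1}$, both of which check out. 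What your approach buys is a conceptual explanation and a direct closed form for $\E'$ (in the spirit of Abrams' computation of quantum Euler classes from ring presentations), at the cost of having to verify the standard Frobenius facts (the isomorphism $\Q[q][y]/\langle p\rangle\cong\QH^{\mathrm{res}}$, that $\phi$ is the coefficient-of-$y^r$ form scaled by $\m$, and the dual-basis formula, which you cite rather than prove); what the paper's argument buys is self-containedness and uniform treatment directly at the level of Gromov--Witten coefficients, which meshes with the surrounding computation of $\H^{\star(r+1)}$. The two are entirely compatible, and your route is not circular: it uses only Remark \ref{rmk:basis}, Proposition \ref{Graber}, and Givental's relations.
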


The proof is presented in Section \ref{sec:computation E'}.

We remark here that the way we prove Proposition \ref{coeff E'} is purely algebraic. It would be interesting to find a more conceptual explanation for this equality.

\subsection{Computation of \texorpdfstring{$\Gamma$}{Gamma}}\label{sec:computation Gamma}
The proof of Proposition \ref{coeff Gamma} relies on the following preliminary lemma which is very similar to \cite[Lemma 5.2]{BP}.

\begin{lemma}\label{product with H}
Let $\Lambda \in QH^*(X)^{\mathrm{res}}$ be a degree $2r$ class such that 
$$
\Lambda=a \H^r \ \mathrm{mod} \ q \ \mathrm{and} \ \H  \star \Lambda =0
$$
where $a \in \Q $. Then 
$$
\mathrm{Coeff}(\Lambda, q^j \H^{\star r-jd})=-a \mathrm{Coeff}(\H^{\star r+1},q^j \H^{r+1-jd})
$$
for $i=1,...,\lfloor \frac{r}{d} \rfloor$.
\end{lemma}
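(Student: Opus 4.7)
The strategy is pure linear algebra inside the restricted quantum cohomology ring. First, I would use Remark \ref{rmk:basis} to expand $\Lambda$ in the $\Q[q]$-basis $\{1, \H, \H^{\star 2}, \ldots, \H^{\star r}\}$ of $QH^*(X)^{\mathrm{res}}$. Since $\Lambda$ is homogeneous of real degree $2r$ and each $q^j \H^{\star i}$ sits in degree $2jd + 2i$, only the basis vectors $q^j \H^{\star (r-jd)}$ with $0 \le j \le \lfloor r/d\rfloor$ can appear, forcing
$$
\Lambda = \sum_{j=0}^{\lfloor r/d\rfloor} c_j \, q^j \H^{\star (r-jd)},
$$
and the hypothesis $\Lambda \equiv a\H^r \pmod q$ immediately pins down $c_0 = a$.

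Next, multiply by $\H$; by Proposition \ref{Graber} the product stays inside $QH^*(X)^{\mathrm{res}}$, so
$$
0 \;=\; \H \star \Lambda \;=\; a\,\H^{\star (r+1)} \;+\; \sum_{j=1}^{\lfloor r/d\rfloor} c_j \, q^j \H^{\star (r+1-jd)}.
$$
The element $\H^{\star (r+1)}$ is not itself a basis vector (its power of $\H$ exceeds $r$), so when rewritten in the basis it contributes only terms with $j \ge 1$:
$$
\H^{\star (r+1)} \;=\; \sum_{j \ge 1} \mathrm{Coeff}\bigl(\H^{\star (r+1)},\, q^j \H^{\star (r+1-jd)}\bigr)\, q^j \H^{\star (r+1-jd)}.
$$

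Substituting this into the previous display and equating coefficients of the linearly independent basis vectors $q^j \H^{\star (r+1-jd)}$ gives, for each $j = 1, \ldots, \lfloor r/d\rfloor$,
$$
c_j \;+\; a\,\mathrm{Coeff}\bigl(\H^{\star (r+1)},\, q^j \H^{\star (r+1-jd)}\bigr) \;=\; 0,
$$
which is exactly the asserted formula. The whole argument is a single coefficient comparison, so no step is really hard; the only point that needs a moment's care is the edge case $d \mid (r+1)$ with $d \nmid r$, in which the expansion of $\H^{\star (r+1)}$ has one extra term at $j = \lfloor r/d\rfloor + 1$ with no $c_j$-partner. Matching that coefficient merely records the automatic consistency $a\,\mathrm{Coeff}(\H^{\star (r+1)},\, q^{(r+1)/d})=0$, which lies outside the range of the lemma's claim and so does not affect the proof.
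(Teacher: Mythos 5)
Your proof is correct and follows essentially the same route as the paper: expand $\Lambda$ in the $\Q[q]$-basis $1,\H,\dots,\H^{\star r}$ of $QH^*(X)^{\mathrm{res}}$, multiply by $\H$, and compare coefficients of the basis vectors $q^j\H^{\star(r+1-jd)}$. Your extra remarks on the degree bookkeeping and the edge case $d\mid(r+1)$ only make explicit what the paper leaves implicit.
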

\begin{proof}
Write 
$$
\Lambda= a \H^{\star r} + \sum_{i=1}^{\lfloor \frac{r}{d} \rfloor} \mathrm{Coeff}(\Lambda, q^i \H^{\star r-id}) q^i \H^{\star r-id}.
$$
Then we have 
$$
0= \H \star \Lambda = a \H^{\star r+1} + \sum_{i=1}^{\lfloor \frac{r}{d} \rfloor} \mathrm{Coeff}(\Lambda, q^i \H^{\star r-id}) q^i \H^{\star r+1-id}
$$
from which we obtain the lemma.
\end{proof}

\begin{proof}[Proof of Proposition \ref{coeff Gamma}]
By \cite[Corollary 5.3]{BP} (the same proof of that corollary also applies when $|\m|=r+L$), we have $\H \star \Gamma =0$. Moreover
$$
\Gamma= \sum_{j: \gamma_j \in H^r(X)^{\mathrm{Prim}} } \gamma_j^\vee \cup \gamma_j= \mathrm{dim} (H^r(X)^{\mathrm{prim}}) (-1)^r \m^{-1} \H^r \ \ \mathrm{mod} \ q.
$$
Note that we $\mathrm{dim} (H^r(X)^{\mathrm{prim}})=(-1)^r(\chi(X)-r-1)$. The proposition now follows from an application of Lemma \ref{product with H}.

\end{proof}

\subsection{Computation of \texorpdfstring{$\E'$}{E'}}\label{sec:computation E'}
In this subsection we prove Proposition \ref{coeff E'} by showing the following equivalent result:
\begin{lemma}\label{lem:computation E'}
For $j=1,...,\lfloor \frac{r}{d} \rfloor$ we have
$$
\sum_{i=0}^r \mathrm{Coeff}(\H^i \star \H^{r-i}, q^j \H^{\star (r-jd)})= -(r-jd+1) \ \mathrm{times} \ \mathrm{the} \ \mathrm{RHS} \  \mathrm{of} \  \mathrm{Equation} \ \ref{eqn:Compuatation of Hr+1} .
$$
\end{lemma}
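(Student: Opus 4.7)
The plan is to reduce Lemma \ref{lem:computation E'} to an explicit combinatorial identity in the coefficients produced by Proposition \ref{prop:explicit H^i}, and then prove that identity using the symmetry $\alpha^k_{r-a} = \alpha^k_{kd + a - 1}$ of two-point genus-zero Gromov--Witten invariants, which becomes ``palindromic'' precisely because of the choice $M := r - jd$.

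First I would apply Proposition \ref{prop:explicit H^i} to each factor in $\H^i \star \H^{r-i}$:
$$
\H^i = \sum_{k \ge 0} A_{i,k}\, q^k\, \H^{\star(i - kd)},
$$
with $A_{i,0} = 1$ and $A_{i,k}$ given explicitly for $k \ge 1$. Inspecting the formula, $A_{i,k}$ depends on $i$ only through $N := i - kd$, so I set $S_k(N) := A_{N+kd,\,k}$ and $S_0(N) := 1$. Since $k + \ell \le \lfloor i/d\rfloor + \lfloor (r-i)/d\rfloor \le r/d$ in the relevant range, no magic relation is needed, and $\H^{\star(i - kd)} \star \H^{\star(r - i - \ell d)} = \H^{\star(r - (k+\ell)d)}$. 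Therefore
$$
\mathrm{Coeff}\bigl(\H^i \star \H^{r-i},\, q^j \H^{\star(r-jd)}\bigr) = \sum_{k + \ell = j} A_{i,k}\, A_{r-i, \ell},
$$
and summing over $i$ with the substitution $N := i - kd$ rewrites the left-hand side of the lemma as
$$
\sum_{k + \ell = j}\, \sum_{N = 0}^{M} S_k(N)\, S_\ell(M - N), \qquad M := r - jd.
$$
Comparing signs in Proposition \ref{prop:explicit H^i} and Equation \ref{eqn:Compuatation of Hr+1} gives $\mathrm{Coeff}(\H^{\star(r+1)},\, q^j \H^{\star(r+1-jd)}) = -S_j(M+1)$, so the lemma becomes equivalent to the combinatorial identity
$$
\sum_{k+\ell = j}\, \sum_{N=0}^{M} S_k(N)\, S_\ell(M-N) \;=\; (M+1)\, S_j(M+1).
$$

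The core of the argument, and the main obstacle, is this last identity. It is not a formal identity in the $S_k$: it fails for arbitrary $M$, and holds precisely because the specific value $M = r - jd$ turns the two-point symmetry into the palindromic relation $\alpha^p_{r-u} = \alpha^p_{r-(M+1-u)}$ valid on the full $u$-range $\{0, \dots, M+1\}$. To prove it I would first extract from the Proposition \ref{prop:explicit H^i} formula the one-step recursion
$$
S_k(N) - S_k(N-1) \;=\; -\sum_{p=1}^{k} p\, \alpha^p_{r - (k-p)d - N}\, S_{k-p}(N),
$$
obtained by isolating the contribution of $u_1 = N$, and then expand both sides of the identity as polynomials in the $\alpha$-symbols and match coefficients, using the palindrome to re-index $u$-variables. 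Equivalently, one can proceed bijectively: a term in $(M+1)\,S_j(M+1)$ is parameterized by a composition of $j$, a weakly decreasing $\vec u$ in $\{0, \dots, M+1\}$, and a ``cut point'' $c \in \{0, \dots, M\}$; splitting the data at $c$ produces the pairs of structures summed on the left, and the palindrome ensures the resulting $\alpha$-monomials agree. The most delicate point is tracking how the partial-sum shifts $v_a$ in the $\alpha$-subscripts interact with the palindromic symmetry; verifying that these line up correctly is the bulk of the work.
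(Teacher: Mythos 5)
Your reduction is sound and is essentially the paper's own route: you expand each factor by Proposition \ref{prop:explicit H^i}, note that $\mathrm{Coeff}(\H^i,q^k\H^{\star(i-kd)})$ depends on $i$ only through $N=i-kd$, and recast the lemma as the identity $\sum_{k+\ell=j}\sum_{N=0}^{M}S_k(N)S_\ell(M-N)=(M+1)\,S_j(M+1)$ with $M=r-jd$; your ``cut point'' $c$ is exactly the quantity $i-hd\in[0,r-jd]$ over which the paper counts, your recursion for $S_k(N)-S_k(N-1)$ is a correct consequence of the same formula, and the sign bookkeeping giving the overall factor $-(r-jd+1)$ is right.

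The gap is at the decisive step, which you explicitly defer (``the bulk of the work'') and for which the one precise statement you offer is false. The relation $\alpha^{p}_{r-u}=\alpha^{p}_{r-(M+1-u)}$ does not hold for $p\neq j$: by the symmetry $\alpha^{k}_{r-b}=\alpha^{k}_{kd+b-1}$ one has $\alpha^{p}_{r-u}=\alpha^{p}_{pd+u-1}$, whereas $\alpha^{p}_{r-(M+1-u)}=\alpha^{p}_{jd+u-1}$, so equality forces $p=j$ (it is correct only for length-one compositions). The flip $u\mapsto M+1-u$ works only in tandem with reversing the composition: for a composition $(i_1,\dots,i_t)$ of $j$ with partial sums $\sigma_a$ one has $\alpha^{i_a}_{r-(j-\sigma_a)d-u}=\alpha^{i_a}_{r-\sigma_{a-1}d-(M+1-u)}$, and $\sigma_{a-1}=j-(i_a+\dots+i_t)$ is precisely the complementary partial sum attached to $i_a$ in the reversed composition. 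With this, the head of a monomial of $S_j(M+1)$ split at $c=N$ (the entries with $u_a\geq N+1$), read backwards with flipped $u$-values lying in $[0,M-N]$, is literally a monomial of the second factor, while the tail with $u$-values in $[0,N]$ is one of $S_k(N)$; one must also check that for each $c$ the splitting is unique (the head length is forced, namely $\min\{f:u_f\leq c\}-1$), so each monomial occurs exactly $M+1$ times, and the comparison of $(-1)^{w+z}=(-1)^{t}$ with the sign $(-1)^{t+1}$ in Equation \ref{eqn:Compuatation of Hr+1} yields the minus sign. This re-indexing is exactly the content of the paper's proof of Lemma \ref{lem:computation E'}; as written, your argument stops just short of it, and using the displayed palindrome literally would make the monomials fail to match.
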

\begin{proof}
For $0 \leq i \leq r$ and $1 \leq j \leq \lfloor \frac{r}{d} \rfloor$ we have
$$
\mathrm{Coeff}(\H^i \star \H^{r-i}, q^j \H^{\star (r-jd)}) 
= \sum_{(h,s) \in \Z_{\geq 0}^{\times 2}:h+s=j} \mathrm{Coeff}(\H^i,q^h \H^{\star (i-hd)})\mathrm{Coeff}(\H^{r-i},q^s \H^{\star (r-i-sd)})
$$
and for each $(h,s)$ as above such that $hd \leq i$ and $r-i \geq sd$, the product $$
\mathrm{Coeff}(\H^i,q^h \H^{\star (i-hd)})\mathrm{Coeff}(\H^{r-i},q^s \H^{\star (r-i-sd)})
$$
is equal to 
\begin{equation} \label{product1}
\sum_{\substack{1 \leq w \leq h \\ 1 \leq z \leq s}}(-1)^{w + z} \sum_{\substack{y_1+...+y_w=h \\ x_1+...+x_z=s}}  \sum_{\substack{0 \leq p_w \leq ... \leq p_1 \leq i-hd \\ 0 \leq v_z \leq ...\leq v_1 \leq r-i-sd}} \prod_{a=1}^w i_a \alpha^{i_a}_{r-(h-y_1-...-y_a)d-p_a} \prod_{b=1}^z x_b \alpha^{x_b}_{r-(s-x_1-...-x_b)d-v_b}.
\end{equation}
Note that it could be $h=0$ or $s=0$ (but not $h=s=0$ being $h+s=j>0$).
Observe the symmetry
$$
\alpha^{x_b}_{r-(s-x_1-...-x_b)d-v_b}=\alpha^{x_b}_{(s-x_1-...-x_{b-1})d+v_b-1},
$$
and that 
$$
(s-x_1-...-x_{b-1})d+v_b-1=r-[(j-x_b-...-x_z)d +r-v_b+1-jd]
$$
where $r-v_b+1-jd$ varies in $[i-hd+1,r+1-jd]$ for $0 \leq v_b \leq r-i-sd$. Therefore we can rewrite the quantity in Equation \ref{product1} as
\begin{equation}\label{product2}
\sum_{\substack{1 \leq w \leq h \\ 1 \leq z \leq s}}(-1)^{w + z} \sum_{\substack{y_1+...+y_w=h \\ x_1+...+x_z=s}}  \sum_{\substack{0 \leq p_w \leq ... \leq p_1 \leq i-hd \\ i-hd+1 \leq v_1 \leq ...\leq v_z \leq r+1-jd}} \prod_{a=1}^w i_a \alpha^{i_a}_{r-(h-y_1-...-y_a)d-p_a} \prod_{b=1}^z x_b \alpha^{x_b}_{r-(j-x_b-...-x_z)d- v_b}.
\end{equation}
Note that the quantity $i-hd$ appearing in Equation \ref{product2} under the third summation symbol varies in $[0, r-jd]$ and not in $[0,r]$ (if $i-hd >r-jd$, then $r-i<(j-h)d=sd$ and so $\mathrm{Coeff}(\H^{r-i},q^s \H^{\star (r-i-sd)})=0$).

Fix $j \in \{1,...,\lfloor \frac{r}{d} \rfloor \}$
and let $\ell,(i_1,...,i_\ell)$ and $(u_1,...,u_\ell)$ be such that 
$$
0 \leq \ell \leq j, \ i_1+...+i_\ell=j \ \mathrm{and} \  0\leq u_\ell \leq ... \leq u_1 \leq r+1-jd.
$$  
We want to count how many times the term
\begin{equation} \label{terms}
(-1)^\ell \prod_{a=1}^\ell i_a \alpha^{i_a}_{r-(j-i_1-...-i_a)d-u_a}
\end{equation}
appears in 
$$
b_j :=\sum_{i=0}^r \mathrm{Coeff}(\H^i \star \H^{r-i}, q^j \H^{\star (r-jd)})=\sum_{i=0}^r \sum_{h+s=j} \mathrm{Coeff}(\H^i,q^h \H^{\star (i-hd)})\mathrm{Coeff}(\H^{r-i},q^s \H^{\star (r-i-sd)}).
$$
\\First of all we observe that it must be 
$w+z= \ell$
and
$
(x_z,...,x_1,y_1,...,y_w)=(i_1,...,i_\ell).
$
Moreover, fixed any integer $g \in [0, r-jd]$, if $i-hd=g$ then in Equation \ref{product2} it must be 
$$
z=\mathrm{min}\{ f: u_f \leq g \}-1 \ \mathrm{and} \ w=\ell-z
$$
where if $\{ f: u_f \leq g \}= \emptyset$, we set $z= \ell$ and $w=\ell$.
\\Therefore
$$
v_z=u_1,...,v_1=u_z,p_1=u_{z+1},...,p_w=u_\ell
$$
and 
$$
x_z=i_1,...,x_1=i_z,y_1=i_{z+1},...,y_w=i_\ell
$$
and finally
$$
h=y_1+...+y_w \ \mathrm{and} \ s=x_1+...+x_z.
$$
This means that the term in Equation \ref{terms} appears in $b_j$ once for every $g \in [0,r-jd]$, and thus a total of $r-jd+1$ times. This concludes the proof of the lemma.
\end{proof}

\subsection{Computation of \texorpdfstring{$\E$}{E}}\label{sec:Computation of E}
We finally prove Theorem \ref{Main theorem}. We will distinguish two cases.

\vspace{8pt}
\noindent \textbf{ $\bullet$ Case $|\m| \leq r+L-1$.}
\vspace{8pt}

By Relation \ref{eqn:magic1}, in this case we have
$$
\mathrm{Coeff}(\H^{\star r+1},q^j \H^{\star r+1-jd})= 0 \ \mathrm{for} \ j>1 \ \mathrm{and} \
\mathrm{Coeff}(\H^{\star r+1},q \H^{\star r+1-d})=\m^{\m}.
$$
Therefore, by Propositions \ref{coeff Gamma} and \ref{coeff E'}, we have
$$
\mathrm{Coeff}(\E,q^j \H^{\star r-jd})=0 \ \mathrm{for} \ j>1 \ \mathrm{and} \
\mathrm{Coeff}(\E,q \H^{\star r-d})=\m^{m-1}(r+L+1-|\m|-\chi(X)).
$$
This is what we wanted to prove.

\vspace{8pt}
\noindent \textbf{ $\bullet$ Case $|\m| = r+L$.}
\vspace{8pt}

Note that in this case $d=1$. Relation \ref{eqn:magic2}, can be rewritten as 
$$
\mathrm{Coeff}(\H^{\star r+1},q^j \H^{\star r+1-j})=  {r \choose {j-1}} (\m !)^{j-1} \Bigg[\m^{\m}- \frac{\m !}{j} (r+1) \Bigg]
$$
for $j=1,...,r+1$. Therefore for $j=1,...,r$ we have
\begin{align*}
\mathrm{Coeff}(\E,q^j \H^{\star r-j})&=\m^{-1}(j- \chi(X) ) \mathrm{Coeff}(\H^{\star r+1},q^j\H^{\star r+1-j})\\
&=\m^{-1}(j- \chi(X) ){r \choose {j-1}} (\m !)^{j-1} \Bigg[\m^{\m}- \frac{\m !}{j} (r+1) \Bigg].
\end{align*}
This concludes the proof.

\section{Virtual Tevelev degrees }
We now apply our computations to prove Theorem \ref{vTev}. We distinguish two cases again.

\vspace{8pt}
\noindent \textbf{ $\bullet$ Case $|\m| \leq r+L-1$.}
\vspace{8pt}

This case follows from \cite[Proposition 5.16]{BP} and Theorem \ref{Main theorem} above.

\vspace{8pt}
\noindent \textbf{ $\bullet$ Case $|\m| = r+L$.}
\vspace{8pt}

The first step is to express $\E$ in terms of the basis $1,\H+\m!q,...,(\H+\m!q)^{\star r}$.
This will use the following simple combinatorial lemma.
\begin{lemma}
For $j=2,...,r$ the following two equalities hold:
\begin{equation}\label{eqn:comb1}
\sum_{i=1}^j {r \choose {i-1}}{{r-i} \choose {j-i}}(-1)^{j-i}=1
\end{equation}
and 
\begin{equation}\label{eqn:comb2}
\sum_{i=1}^j i{r \choose {i-1}}{{r-i} \choose {j-i}}(-1)^{j-i}=r+1.
\end{equation}
\end{lemma}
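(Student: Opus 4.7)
The plan is two-step: prove Equation \ref{eqn:comb1} by extracting a binomial coefficient from the trivial identity $((1+x) - x)^r = 1$, then deduce Equation \ref{eqn:comb2} by splitting $i = 1 + (i-1)$ and reducing the weighted part to a shifted instance of Equation \ref{eqn:comb1} via the absorption identity $(i-1)\binom{r}{i-1} = r\binom{r-1}{i-2}$.

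For Equation \ref{eqn:comb1}, I would substitute $a = i-1$ and encode $\binom{r-1-a}{j-1-a} = [x^{j-1}]\, x^a(1+x)^{r-1-a}$, rewriting the left-hand side as
\[
(-1)^{j-1}\,[x^{j-1}]\,\frac{1}{1+x}\sum_{a=0}^{j-1}\binom{r}{a}(-x)^a(1+x)^{r-a}.
\]
The key observation is that for $a \geq j$ the summand $\binom{r}{a}(-x)^a(1+x)^{r-a}$ is a polynomial whose minimal degree in $x$ is $a \geq j$, so it cannot contribute to $[x^{j-1}]$. Extending the truncated sum to all $a \geq 0$ is therefore free, and by the binomial theorem the extended sum collapses to $((1+x) - x)^r = 1$. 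Extracting $(-1)^{j-1}\,[x^{j-1}](1+x)^{-1} = (-1)^{j-1}\cdot(-1)^{j-1} = 1$ gives the desired value.

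For Equation \ref{eqn:comb2}, I would write $i = 1 + (i-1)$ and split the sum. The constant part equals $1$ by Equation \ref{eqn:comb1}. In the remaining piece, I would apply $(i-1)\binom{r}{i-1} = r\binom{r-1}{i-2}$, drop the vanishing $i = 1$ term, and reindex $i' = i - 1$, producing
\[
r\sum_{i'=1}^{j-1}\binom{r-1}{i'-1}\binom{(r-1)-i'}{(j-1)-i'}(-1)^{(j-1)-i'},
\]
which is $r$ times Equation \ref{eqn:comb1} at parameters $(r-1, j-1)$ and hence equals $r$. Adding the two contributions gives $r+1$.

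The argument is essentially mechanical once the generating function setup is in place. The only delicate points I anticipate are the bookkeeping needed to justify extending the truncated summation (which rests on the polynomial $(1+x)^{r-a}$ having only non-negative powers when $0 \leq a \leq r$) and a mild edge-case check in the reduction: when $j = 2$ one applies Equation \ref{eqn:comb1} at $j' = 1$, where the sum reads $\binom{r-1}{0}\binom{r-2}{0} = 1$ and can be confirmed directly.
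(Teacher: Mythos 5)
Your proof is correct. Note that the paper offers no argument to compare against: its proof is literally ``left to the reader,'' so your write-up supplies exactly what is omitted. The generating-function verification of Equation \ref{eqn:comb1} is sound --- the identity $\binom{r-1-a}{j-1-a}=[x^{j-1}]\,x^a(1+x)^{r-1-a}$ is valid in the range $0\leq a\leq j-1\leq r-1$, the extension of the truncated sum is justified because each added term $\frac{1}{1+x}\binom{r}{a}(-x)^a(1+x)^{r-a}$ is a formal power series of order at least $a\geq j$, and the collapse to $((1+x)-x)^r=1$ followed by $[x^{j-1}](1+x)^{-1}=(-1)^{j-1}$ gives the claimed value. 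The deduction of Equation \ref{eqn:comb2} via $i=1+(i-1)$, the absorption identity $(i-1)\binom{r}{i-1}=r\binom{r-1}{i-2}$, and reindexing to the $(r-1,j-1)$ instance of Equation \ref{eqn:comb1} is also correct, and you rightly flag and check the $j=2$ edge case, where the reduced sum is the single term $\binom{r-1}{0}\binom{r-2}{0}=1$.
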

\begin{proof}
The proof is left to the reader.
\end{proof}

\begin{lemma}\label{lem:E in the new basis}
We have 
\begin{align*}
\E= \m^{-1} \chi(X) (\H+\m!q)^{\star r}+[\m^{-1}(r+1- \chi(X))( \m^{\m}- \m !)- \m^{\m-1}r ]q(\H+\m!q)^{\star r-1}&\\
+ \sum_{j=2}^r [\m^{-1}(\m !)^{j-1}(r+1-\chi(X))( \m^\m - \m!)]q^j(\H+\m!q)^{\star r-j}&. 
\end{align*}
\end{lemma}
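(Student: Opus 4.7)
The plan is to start from the formula for $\E$ given by Theorem \ref{Main theorem} in the case $|\m|=r+L$ (so $d=1$), and to reexpand each $\H^{\star k}$ in the new basis via the binomial identity
\[
\H^{\star k} \;=\; \bigl((\H+\m!q)-\m!q\bigr)^{\star k} \;=\; \sum_{i=0}^{k}\binom{k}{i}(-\m!q)^{i}(\H+\m!q)^{\star(k-i)},
\]
which is legal because $\H$ commutes with the central scalar $q$. After substituting this into the formula of Theorem \ref{Main theorem} and collecting powers of $q$, the coefficient of $q^{t}(\H+\m!q)^{\star(r-t)}$ in $\E$ becomes
\[
\m^{-1}\chi(X)\binom{r}{t}(-\m!)^{t} + \m^{-1}(\m!)^{t-1}\sum_{j=1}^{t}(j-\chi(X))\binom{r}{j-1}\binom{r-j}{t-j}(-1)^{t-j}\Bigl[\m^{\m} - \tfrac{\m!}{j}(r+1)\Bigr].
\]
I would first verify the case $t=0$, which is immediate (it gives $\m^{-1}\chi(X)$), and then split the remaining cases according to the bracket $[\m^\m -\frac{\m!}{j}(r+1)]$.

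For the $\m^\m$-piece, the inner sum is $\sum_{j=1}^{t}(j-\chi(X))\binom{r}{j-1}\binom{r-j}{t-j}(-1)^{t-j}$, which by Equations \ref{eqn:comb1} and \ref{eqn:comb2} equals $(r+1)-\chi(X)$ for every $t\geq 2$. For the $\m!$-piece the relevant sum is $\sum_{j=1}^{t}\tfrac{j-\chi(X)}{j}\binom{r}{j-1}\binom{r-j}{t-j}(-1)^{t-j}$; using $\tfrac{1}{j}\binom{r}{j-1}=\tfrac{1}{r+1}\binom{r+1}{j}$, Pascal's identity $\binom{r+1}{j}=\binom{r}{j-1}+\binom{r}{j}$, and the standard formula $\binom{r}{j}\binom{r-j}{t-j}=\binom{r}{t}\binom{t}{j}$, this sum reduces via Equation \ref{eqn:comb1} together with the vanishing $\sum_{j=0}^{t}\binom{t}{j}(-1)^{t-j}=0$ to $1-\tfrac{\chi(X)}{r+1}\bigl[1-(-1)^{t}\binom{r}{t}\bigr]$. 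Substituting back, the stray $\chi(X)\binom{r}{t}(-\m!)^{t}$ term cancels cleanly against the extra $(-1)^{t}\binom{r}{t}$ piece, and what remains simplifies to the asserted $\m^{-1}(\m!)^{t-1}(r+1-\chi(X))(\m^{\m}-\m!)$ for every $t\geq 2$.

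The case $t=1$ must be treated by hand because Equation \ref{eqn:comb2} fails there: the single-term sum equals $1$ rather than $r+1$. Direct evaluation of the $t=1$ coefficient produces an additional contribution of $-\m^{\m-1}r$ compared to the $t\geq 2$ pattern, which is precisely the extra summand appearing in the statement.

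The main obstacle I anticipate is the cancellation in the $\m!$-piece: one has to check that the $\chi(X)$-dependence coming from the top summand $\m^{-1}\chi(X)\H^{\star r}$ of $\E$ conspires with the $\chi(X)/j$ contribution inside the sum so as to leave only the clean factor $r+1-\chi(X)$. This is the step that forces the auxiliary identity $\sum_{j=0}^{t}\binom{r+1}{j}\binom{r-j}{t-j}(-1)^{t-j}=1$; once that identity is in hand, the rest of the argument is bookkeeping tailored to Equations \ref{eqn:comb1} and \ref{eqn:comb2}.
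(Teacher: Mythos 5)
Your proposal is correct and follows essentially the same route as the paper: substitute $\H=(\H+\m!q)-\m!q$ into the expression of $\E$ from Theorem \ref{Main theorem}, collect the coefficient of $q^{t}(\H+\m!q)^{\star(r-t)}$, and reduce the resulting binomial sums using Equations \ref{eqn:comb1} and \ref{eqn:comb2} (your $\m^{\m}$/$\m!$ grouping and the auxiliary identity $\sum_{j=0}^{t}\binom{r+1}{j}\binom{r-j}{t-j}(-1)^{t-j}=1$ are just a slightly different bookkeeping of the paper's four-term split). Your explicit treatment of the $t=1$ case, which the paper leaves to the reader, checks out as well.
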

\begin{proof}
This is an algebraic check substituting
$$
\H=(\H +\m ! q) - \m ! q
$$
in the expression of $\E$ found in Theorem \ref{Main theorem}.
\\Here we will deal with $\mathrm{Coeff}(\E,q^j(\H+\m!q)^{\star r-j})$ for $j=2,...,r$. The cases $j=0,1$ are instead left to the reader. 
\\Using Theorem \ref{Main theorem}, we see that for $j=2,...,r$ the coefficient $\mathrm{Coeff}(\E,q^j(\H+\m!q)^{\star r-j})$ is equal to
\begin{align*}
    \m^{-1} \chi(X) {r \choose j}& (-1)^j (\m !)^j \\ & +\sum_{i=1}^j \m^{-1} (i-\chi(X)) {r \choose {i-1}} (\m!)^{i-1}\Bigg[\m^\m -\m ! \frac{(r+1)}{i}\Bigg] {r-i \choose j-i}(-1)^{j-i}(\m!)^{j-i} 
\end{align*}
which we now rewrite as a sum of four terms. The first one is
$$
\m^{-1}\chi(X)( \m !)^{j-1} \m! \Bigg[ (-1)^j {r \choose j} + \sum_{i=1}^j (-1)^{j-i} {r \choose {i-1}} {{r-i} \choose {j-i}}\frac{r+1}{i} \Bigg] 
=\m^{-1}\chi(X)( \m !)^{j-1} \m!
$$
where we used Equation \ref{eqn:comb1}. The second one is
$$
-\m^{-1}\chi(X)( \m !)^{j-1} \m^\m \sum_{i=1}^j {r \choose i-1} {r-i \choose j-i}(-1)^{j-i}=-\m^{-1}\chi(X)( \m !)^{j-1} \m^\m
$$
where we used Equation \ref{eqn:comb1}. The third term is
$$
-\m^{-1} (\m !)^{j-1} \m! \sum_{i=1}^j {r \choose i-1} {r-i \choose j-i}(-1)^{j-i}(r+1)=-\m^{-1} (\m !)^{j-1} \m! (r+1)
$$
where we used again Equation \ref{eqn:comb1}. Finally the last term is
$$
\m^{-1}(\m !)^{j-1} \m^\m \sum_{i=1}^j i {r \choose i-1}{r-i \choose j-i}(-1)^{j-i}=\m^{-1}(\m !)^{j-1} \m^\m (r+1)
$$
where instead we used Equation \ref{eqn:comb2}.
\\Summing everything up we obtain the desired conclusion.
\end{proof}

Although the full expression of $\E$ might be a bit complicated, the product $(\H+\m!q)^{\star r} \star \E$ is quite simple.

\begin{corollary}\label{cor: E is simple}
We have 
$$
(\H+\m!q)^{\star r} \star \E=[\m^{-1}-\m^{-r \m -1}(\m !)^r (r+1-\chi(X))] (\H+ \m!q)^{\star 2r}.
$$
\end{corollary}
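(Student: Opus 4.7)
The plan is to apply Lemma \ref{lem:E in the new basis} to expand $\E$ in the basis $\{q^j(\H+\m!q)^{\star(r-j)}\}_{j=0}^{r}$ and then iterate the magic relation \ref{eqn:magic2} to reduce each product $(\H+\m!q)^{\star r} \star q^j (\H+\m!q)^{\star(r-j)}$ to a scalar multiple of $(\H+\m!q)^{\star 2r}$. Setting $K := \H+\m!q$ for brevity, the relation $K^{\star (r+1)} = \m^{\m} q\, K^{\star r}$ gives, by an immediate induction, $K^{\star(r+k)} = (\m^{\m} q)^{k}\, K^{\star r}$ for every $k\geq 0$. In particular $K^{\star 2r} = (\m^{\m}q)^{r}K^{\star r}$, hence $q^{r}K^{\star r} = \m^{-r\m}K^{\star 2r}$.

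Applying this identity to the $j$-th basis element yields
$$K^{\star r}\star\bigl(q^{j}K^{\star(r-j)}\bigr) = q^{j}K^{\star(2r-j)} = \m^{(r-j)\m}\,q^{r}\,K^{\star r} = \m^{-j\m}\,K^{\star 2r}$$
for all $0 \leq j \leq r$. Thus the entire product $K^{\star r}\star\E$ is a single multiple of $K^{\star 2r}$, and the corollary reduces to computing the scalar $\sum_{j=0}^{r} c_{j}\m^{-j\m}$, where the $c_{j}$ are the coefficients of $\E$ in the new basis provided by Lemma \ref{lem:E in the new basis}.

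The next step is purely algebraic. Setting $A := r+1-\chi(X)$ and $B := \m^{\m}-\m!$, the coefficients for $j\geq 1$ have the uniform shape $\m^{-1}(\m!)^{j-1}AB$ (plus an isolated correction $-\m^{\m-1}r$ at $j=1$). Factoring $\m^{-1-\m}AB$ out, the sum over $j=1,\dots,r$ becomes a geometric series in $u := \m!/\m^{\m}$, and the identity $1-u = B/\m^{\m}$ telescopes away the factor $B$, producing a contribution $\m^{-1}A\bigl(1-(\m!)^{r}\m^{-r\m}\bigr)$.

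Finally, adding the $j=0$ contribution $\m^{-1}\chi(X)$ and the isolated $-\m^{-1}r$ from $c_{1}$, the constant part becomes $\m^{-1}(\chi(X) - r + A) = \m^{-1}$ (since $A = r+1-\chi(X)$), leaving exactly $\m^{-1} - A(\m!)^{r}\m^{-1-r\m}$, as claimed. The main (and really only) obstacle is careful bookkeeping of the coefficients from Lemma \ref{lem:E in the new basis}; everything else is a single application of the magic relation together with the standard closed form of a geometric sum.
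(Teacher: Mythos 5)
Your proof is correct and follows essentially the same route as the paper: expand $\E$ in the basis $q^j(\H+\m!q)^{\star(r-j)}$ via Lemma \ref{lem:E in the new basis} and iterate the relation \ref{eqn:magic2} to collapse everything onto $(\H+\m!q)^{\star 2r}$. You simply carry out explicitly the scalar bookkeeping (the geometric-sum telescoping, which indeed yields $\m^{-1}-\m^{-r\m-1}(\m!)^r(r+1-\chi(X))$) that the paper's one-line proof leaves to the reader.
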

\begin{proof}
Use $r$ times Equality \ref{eqn:magic2}.
\end{proof}
We can now finish the proof of Theorem \ref{vTev}.

\begin{proof}[Proof of Theorem \ref{vTev} when $|\m|=r+L$] From Definition \ref{defn:Pi and bi} and Equation \ref{eqn:magic2}, we see that
\begin{equation}\label{eqn:comp1}
\mathsf{P}^{\star n} \star \E^{\star g} \star (\H+\m! q)^{\star r}= \Bigg( \sum_{i=0}^r b_i \m^{-(k+i)\m} \Bigg) (\H+q \m!)^{\star r+rg+nr}.
\end{equation}
Using Definition \ref{defn:Pi and bi} and Equation \ref{eqn:magic2}, we also have
$$
\mathsf{P}^{\star n} \star (\H+\m! q)^{\star r}=\Bigg( \sum_{i=0}^r P_i \m^{-i\m} \Bigg)^n (\H+q \m!)^{\star nr+r}.
$$
and so by Corollary \ref{cor: E is simple}
\begin{equation}\label{eqn:comp2}
\mathsf{P}^{\star n} \star \E^{\star g} \star (\H+\m! q)^{\star r}= \Bigg( \sum_{i=0}^r P_i \m^{-i\m} \Bigg)^n \Bigg(\m^{-1}-\m^{-r \m -1}(\m !)^r (r+1-\chi(X)) \Bigg)^g  (\H+\m!q)^{\star r+gr+nr}.
\end{equation}
The theorem follows by comparing Equation \ref{eqn:comp1} and Equation \ref{eqn:comp2}.
\end{proof}

\section{An algorithm for the calculation of the coefficients $P_i$} \label{sec: algorithm Pi}

In this final section we propose a method to compute the coefficients $P_i$ appearing in Definition \ref{defn:Pi and bi}. In this way, up to implementing the algorithm with a computer, all the virtual Tevelev degrees of $X$ can be explicitly calculated. 

It is possible that our is known to the experts, but we preferred to include it anyway for completeness.

\subsection{Recursion for genus $0$ two-pointed Hyperplane Gromov-Witten invariants}\label{sec:recursion P_i}

Proposition \ref{prop:explicit H^i} reduces the computation of the $P_i$s to the computation of genus $0$ two-pointed Hyperplane Gromov Witten invariants of $X$. These invariants satisfies a recursion involving more general integrals which we now recall.

\subsubsection{The recursion}

For $g \geq 0$, $k>0$ and $n>0$ the \textbf{gravitational descendant invariants} of $X$ are defined by:
$$
\langle \tau_{a_1}(\gamma_1),...,\tau_{a_n}(\gamma_n)\rangle^{X}_{g,k}:= \int_{[\oM(X,k \L)]^{\mathrm{vir}}} \mathrm{ev}_1^*(\gamma_1) \cup \psi_1^{a_1} \cup ... \cup \mathrm{ev}_n^*(\gamma_n) \cup \psi_1^{a_n}
$$
where $\gamma_1,...,\gamma_n \in H^*(X)$ and $\psi_i=c_1(\mathbb{L}_i) \in H^2(\oM(X,k \L))$ is the first Chern class of the cotangent line
$$
{\mathbb{L}_i}_{ \big| [f:(C,p_1,...,p_n) \rightarrow X]}= (T_{p_i}C)^{\vee}
$$
for $i=1,...,n$. 

We start with a monodromy result.
\begin{lemma}\label{lem:monodromy argument}
For any $\gamma \in H^*(X)^{\mathrm{prim}}$, $\gamma_1,...,\gamma_n \in H^*(X)^{\mathrm{res}}$ (with $n \geq 0$), $a_1,...,a_n \in \Z_{\geq 0}$ and $k>0$ we have
$$
\langle \tau_{a_1}(\gamma_1),...,\tau_{a_n}(\gamma_n),\gamma \rangle^X_{0,k}=0.
$$
\end{lemma}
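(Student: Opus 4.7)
The plan is a classical monodromy argument that combines the deformation invariance of Gromov--Witten invariants with the irreducibility of the monodromy action on primitive cohomology.

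First, I would introduce the parameter space $B$ of smooth complete intersections of degree $\m$ in $\P^{r+L}$ together with the universal family $\pi \colon \mathcal{X} \to B$. The restricted insertions $\gamma_1,\dots,\gamma_n$ are pulled back from $\P^{r+L}$ and therefore extend to globally defined flat sections of the relative cohomology local system. In contrast, the primitive sub-local system $\mathcal{V}^{\mathrm{prim}} \subset R^r \pi_* \Q$ carries a non-trivial monodromy representation
\[
\rho \colon \pi_1(B) \longrightarrow GL(H^r(X)^{\mathrm{prim}}).
\]

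Second, the moduli spaces $\oM(X_b,k\L)$ assemble into a family over $B$ with a relative perfect obstruction theory; the virtual fundamental classes, the $\psi$-classes, and the evaluation maps all deform flatly. Consequently the descendant integral
\[
\langle \tau_{a_1}(\gamma_1),\dots,\tau_{a_n}(\gamma_n),\gamma_b \rangle^{X_b}_{0,k}
\]
is locally constant in $b \in B$ when $\gamma_b$ varies as a flat section of $\mathcal{V}^{\mathrm{prim}}$. This produces a $\rho$-invariant linear functional
\[
\phi \colon H^r(X)^{\mathrm{prim}} \longrightarrow \Q, \qquad \gamma \longmapsto \langle \tau_{a_1}(\gamma_1),\dots,\tau_{a_n}(\gamma_n),\gamma\rangle^X_{0,k}.
\]

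Third, I would invoke the classical theorem (Deligne in SGA~7 for hypersurfaces, extended by Lamotke and others to complete intersections) asserting that under the present hypotheses the monodromy $\rho$ on $H^r(X)^{\mathrm{prim}}$ is irreducible and non-trivial, which follows from Picard--Lefschetz theory once $X$ is realized as a smooth member of a Lefschetz pencil of complete intersections of the same multidegree (the primitive cohomology is spanned by vanishing cycles, permuted transitively by monodromy). The kernel of the $\rho$-invariant functional $\phi$ is then a $\rho$-stable subspace of $H^r(X)^{\mathrm{prim}}$, so by irreducibility it is either $0$ or the whole space; the former would force $\dim H^r(X)^{\mathrm{prim}} \leq 1$ with trivial monodromy, contradicting non-triviality, and hence $\phi \equiv 0$.

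The main obstacle is ensuring the monodromy statement is available in precisely the generality of the lemma (arbitrary $m_i \geq 2$ and $r \geq 3$). For hypersurfaces it is standard; for general smooth complete intersections it requires the appropriate Picard--Lefschetz setup, but the connected parameter space $B$ and the assumptions $m_i \geq 2$, $r \geq 3$ make the classical arguments go through without modification.
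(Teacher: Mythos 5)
Your argument is correct, and it is the same general strategy as the paper (deformation invariance of descendant invariants plus monodromy invariance of the restricted insertions, yielding a monodromy-invariant linear functional on $H^r(X)^{\mathrm{prim}}$), but the group-theoretic input you feed into it is genuinely different. The paper invokes Deligne's big monodromy theorem (\cite[Theorem 4.4.1]{Del}): the algebraic monodromy group is the full symplectic or orthogonal group of the intersection form except when $r$ is even and $\m=(2,2)$, in which case it is the Weyl group of $D_{r+3}$; it then kills the invariant functional by noting that $-\mathrm{Id}$ lies in $\mathrm{Sp}$ and $\mathrm{O}$, and in the exceptional case by using the reflections sending each root to its negative. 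You instead use only irreducibility and non-triviality of the monodromy representation, coming from classical Picard--Lefschetz theory (vanishing cycles are conjugate, span the primitive part, and the associated reflections/transvections are non-trivial because the form is non-degenerate): the kernel of the invariant functional is a stable subspace, so the functional vanishes unless $\dim H^r(X)^{\mathrm{prim}}\leq 1$, and the one-dimensional case --- which genuinely occurs, for even-dimensional quadrics --- is excluded because there the reflection in the spanning vanishing cycle acts by $-1$, so the monodromy is non-trivial and admits no non-zero invariant functional. Your route is more economical in that it avoids the big monodromy theorem and the separate treatment of $\m=(2,2)$, at the cost of having to know the classical facts that for complete intersections the primitive middle cohomology coincides with the span of the vanishing cycles of a Lefschetz pencil (this uses weak Lefschetz on an ambient complete intersection cut out by one fewer equation, and is where $r\geq 3$, $m_i\geq 2$ enter) and that the resulting representation is irreducible; the paper's stronger input makes the concluding linear algebra a one-line observation. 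Both proofs are complete and correct.
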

\begin{proof}
The proof is a monodromy argument. Let 
$$
U \subset \prod_{i=1}^L \P(H^0(\P^{r+L},\mathcal{O}(m_i)))
$$
be the open subscheme parametrizing smooth complete intersection in $\P^{r+L}$ of dimension $r$ and degree $\m$. Call
$
V=V^{\mathrm{prim}} \oplus V^{\mathrm{res}}
$
where $V^{\mathrm{prim}}=H^*(X)^{\mathrm{prim}} \otimes_{\Q} \R$ and $V^{\mathrm{res}}=H^*(X)^{\mathrm{res}} \otimes_{\Q} \R$, and
$$
\rho:\pi_1(U,u) \rightarrow \mathrm{Aut}(V).
$$
the monodromy homomorphism (here $u \in U$ is the point corresponding to $X$). The homomorphism $\rho$ preserves the decomposition 
$
V=V^{\mathrm{prim}} \oplus V^{\mathrm{res}}
$
and actually its invariant subspace is exactly $V^{\mathrm{res}}$. Let $G \subset \mathrm{GL}(V^{\mathrm{prim}})$ be the algebraic monodromy group defined as the Zariski closure of the image of $\pi_1(U,u) \rightarrow \mathrm{Aut}(V^{\mathrm{prim}})$. The lemma will follow from the following two standard facts:
\begin{itemize}
\item the invariance under deformations of $X$ in Gromov Witten theory tells us that for any $\alpha \in \pi_1(U,u)$ we have:
$$
\langle \tau_{a_1}(\alpha.\gamma_1),...,\tau_{a_n}(\alpha. \gamma_n),\alpha. \gamma \rangle ^X_{0,k}=\langle \tau_{a_1}(\gamma_1),...,\tau_{a_n}(\gamma_n),\gamma \rangle ^X_{0,k};
$$
\item the intersection form $Q$ on $V^{\mathrm{prim}}$ is preserved by the monodromy action. When $r$ is odd, $Q$ is a non-degenerate skew-symmetric bilinear form, it follows that in this case $\mathrm{dim}(V^{\mathrm{prim}})$ is even and that $G \subseteq \mathrm{Sp}(V^{\mathrm{prim}})$. When instead $r$ is even, $Q$ is a non-degenerate symmetric bilinear form and we have $G \subseteq \mathrm{O}(V^{\mathrm{prim}})$. Since for us $r \geq 3$, by \cite[Theorem 4.4.1]{Del} (see also \cite[Proposition 4.2]{ABPZ}), the previous inclusions are actually equalities except for the case when $r$ is even and $\m=(2,2)$. In this latter case, $\mathrm{dim}(V^{\mathrm{prim}})=r+3$ and $G$ is the Weil group $\mathcal{W}$ of $D_{r+3}$. 
\end{itemize}
Since $- \mathrm{Id} \in \mathrm{Sp}(V^{\mathrm{prim}})$ and $- \mathrm{Id} \in \mathrm{O}(V^{\mathrm{prim}})$ the proof is complete in all cases except for the case $r$ even and $\m=(2,2)$. In this case, note that if $L:V^{\mathrm{prim}} \rightarrow V^{\mathrm{prim}}$ is any $\R$-linear map invariant under $\mathcal{W}$ then it must be $L=0$ (reason: if $\Phi \subset V^{\mathrm{prim}}$ is the root system corresponding to $D_{r+3}$ then for all $v \in \Phi$ the reflection $r_v$ along the hyperplane $v^{\perp}$ lies in $\mathcal{W}$ and sends $v$ to $-v$, thus $L(v)=L(-v)=-L(v)$, from which $L(v)=0$. Since $\mathrm{Span}_{\R}(\Phi)=V^{\mathrm{prim}}$ we are done). To conclude the proof of the lemma, apply this observation with $L=\langle \tau_{a_1}(\gamma_1),...,\tau_{a_n}(\gamma_n), - \rangle ^X_{0,k}$.

\end{proof}
\begin{proposition}\label{prop:recursion} Let $i,a \geq 0$ and $j,k >0$ be integers satisfying
$$
i+j+a= \mathrm{vdim}( \overline{\mathcal{M}}_{0,2}(X,k \L) ).
$$
Then we have
\begin{align*}
\langle \tau_a(\H^i),\H^j\ \rangle^X_{0,k}&=\langle \tau_a(\H^{i+1}),\H^{j-1}\rangle^X_{0,k}+k\langle \tau_{a+1}(\H^i),\H^{j-1} \rangle^X_{0,k} \\
&-\sum_{\ell=1}^{k-1} \m^{-1} \ell \langle \tau_a(\H^i),\H^{\ell d+r-1-i-a}\rangle^X_{0,\ell}\langle \H^{j-1},\H^{(k-\ell)d+r-j}\rangle^X_{0,k-\ell}.
\end{align*}
\end{proposition}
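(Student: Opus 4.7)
The plan is to prove the recursion by evaluating the auxiliary $3$-point descendant invariant
\[
A := \langle \tau_{a+1}(\H^i),\H^{j-1},\H\rangle^X_{0,k}
\]
in two different ways and equating the results.

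The first evaluation uses the divisor equation with the divisor $\H$ (so $\int_{k\L}\H=k$), applied to the $\tau_0(\H)$ insertion. The $\tau_{-1}$-contribution vanishes by the usual convention, giving
\[
A \;=\; k\,\langle \tau_{a+1}(\H^i),\H^{j-1}\rangle^X_{0,k} + \langle \tau_a(\H^{i+1}),\H^{j-1}\rangle^X_{0,k},
\]
which reproduces the first two terms on the right-hand side of the proposition. The second evaluation uses the genus-$0$ topological recursion relation for $\psi_1$ on $\overline{\mathcal{M}}_{0,3}(X,k\L)$: this expresses $\psi_1$ as a sum of boundary divisors that separate marking $1$ from markings $2$ and $3$, with curve classes $\ell\L + (k-\ell)\L$. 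Stability on the marking-$1$ side forces $\ell \geq 1$, so integration against the remaining evaluation classes and gluing through dual bases $\{T_\mu\},\{T^\mu\}$ of $H^*(X)$ yields
\[
A \;=\; \sum_{\ell=1}^{k}\sum_\mu \langle \tau_a(\H^i),T_\mu\rangle^X_{0,\ell}\,\langle T^\mu,\H^{j-1},\H\rangle^X_{0,k-\ell}.
\]
The extremal term $\ell = k$ involves $\overline{\mathcal{M}}_{0,3}(X,0) = X$: the right factor collapses to $\int_X T^\mu \cup \H^j$, and Poincar\'e duality converts the $\mu$-sum into exactly $\langle \tau_a(\H^i),\H^j\rangle^X_{0,k}$, the left-hand side of the recursion. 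For $1 \leq \ell \leq k-1$ a second application of the divisor equation simplifies each summand via $\langle T^\mu,\H^{j-1},\H\rangle^X_{0,k-\ell} = (k-\ell)\,\langle T^\mu,\H^{j-1}\rangle^X_{0,k-\ell}$.

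It remains to reduce the internal $\mu$-sum to a single term. Here the monodromy lemma (Lemma~\ref{lem:monodromy argument}) is essential: since both remaining insertions $\tau_a(\H^i)$ and $\H^{j-1}$ lie in the restricted subring, any $T_\mu$ in the primitive part contributes zero, so only $T_\mu = \H^\mu$ with dual $T^\mu = \m^{-1}\H^{r-\mu}$ survive. The virtual-dimension constraints on the two factors, together with the hypothesis $i+j+a = kd+r-1$, force $\mu = \ell d + r - 1 - i - a$ (equivalently $r-\mu = (k-\ell)d + r - j$), so exactly one $\mu$ contributes for each $\ell$. Equating the two expressions for $A$ and solving for $\langle \tau_a(\H^i),\H^j\rangle^X_{0,k}$ produces the claimed identity. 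The main obstacle is the careful bookkeeping of coefficients across the successive applications of TRR and the divisor equation, together with tracking which boundary components are stable, which determines the precise range of the correction sum.
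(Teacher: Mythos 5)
Your two evaluations of the auxiliary invariant $A=\langle \tau_{a+1}(\H^i),\H^{j-1},\H\rangle^X_{0,k}$ are individually correct: the divisor equation (with the $\tau_{-1}$ term discarded) gives the first two terms of the right-hand side, the genus-$0$ TRR with marking $1$ separated from markings $2,3$ is applied correctly (the $\ell=0$ term is unstable, the $\ell=k$ term collapses by Poincar\'e duality to the left-hand side), and the reduction of the K\"unneth sum via Lemma \ref{lem:monodromy argument} together with the dimension constraint pinning down the single surviving power of $\H$ is fine. This is a genuinely different route from the paper, which simply cites \cite[Corollary 1]{LeeP} and the splitting principle before invoking the monodromy lemma.

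The gap is in the last sentence, ``equating the two expressions for $A$ \dots produces the claimed identity.'' It does not, as written: in your bookkeeping the divisor equation is applied to the factor carrying $\H^{j-1}$ and $\H$, so the correction term indexed by $\ell$ (the degree of the component carrying the descendant insertion) comes with the weight $k-\ell$, and your argument yields
$$
\langle \tau_a(\H^i),\H^j\rangle^X_{0,k}=\langle \tau_a(\H^{i+1}),\H^{j-1}\rangle^X_{0,k}+k\langle \tau_{a+1}(\H^i),\H^{j-1}\rangle^X_{0,k}-\sum_{\ell=1}^{k-1}\m^{-1}(k-\ell)\,\langle \tau_a(\H^i),\H^{\ell d+r-1-i-a}\rangle^X_{0,\ell}\,\langle \H^{j-1},\H^{(k-\ell)d+r-j}\rangle^X_{0,k-\ell},
$$
whereas the statement you are asked to prove carries the weight $\ell$ on that same term. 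No relabelling $\ell\mapsto k-\ell$ reconciles the two, since the substitution also swaps which factor contains the descendant; the difference of the two candidate correction sums is $\sum_{\ell}(k-2\ell)(\cdots)$, which has no reason to vanish. So either you must show how your derivation can be rearranged to give the coefficient $\ell$ (it cannot, by the above), or the discrepancy must be confronted directly: your bookkeeping is at least internally consistent (adding to your relation its marking-swapped analogue recovers the standard identity $\psi_1+\psi_2=\sum_{\beta_1+\beta_2=\beta}[D_{\beta_1,\beta_2}]$ on $\overline{\mathcal{M}}_{0,2}(X,k\L)$), which suggests the weight in the displayed recursion should be rechecked against the precise form of \cite[Corollary 1]{LeeP}. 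As a proof of the Proposition as stated, however, the final step is unjustified and the attempt is incomplete.
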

\begin{proof}
An application of \cite[Corollary 1]{LeeP} and the splitting principle in Gromov Witten theory yields
\begin{align*}
\langle \tau_a(\H^i),\H^j \rangle^X_{0,k}&=\langle \tau_a(\H^{i+1}),\H^{j-1}\rangle^X_{0,k}+k\langle \tau_{a+1}(\H^i),\H^{j-1} \rangle ^X_{0,k} \\
&-\sum_{\ell=1}^{k-1}  \sum_{j=0}^N \ell \langle \tau_a(\H^i),\gamma_j^{\vee} \rangle ^X_{0,\ell} \langle \H^{j-1},\gamma_j \rangle^X_{0,k-\ell}
\end{align*}
where as always $\{\gamma_j\}_{j=0}^N$ is any homogeneous basis of $H^*(X)$ with $\gamma_0=1$ and $\gamma_{N}=\mathsf{P}$. Finally, apply Lemma \ref{lem:monodromy argument} to conclude the proof.
\end{proof}

\subsubsection{The base case }

Consider the recursion of Proposition \ref{prop:recursion}. In each two pointed Gromov Witten integral on the right-hand side, either the quantity $j$ decreased or the quantity $k$ decreased (when compared to those appearing in the left-hand side). Note also that when $k=1$, the recursion becomes simply
$$
\langle \tau_a(\H^i),\H^j \rangle ^X_{0,1}=\langle \tau_a(\H^{i+1}),\H^{j-1} \rangle^X_{0,1}+\langle \tau_{a+1}(\H^i),\H^{j-1} \rangle^X_{0,1}.
$$
So, when $k=1$, $k$ stabilizes while $j$ keeps going down. 
It follows that the recursion completely determines all the integrals 
$$
\langle \tau_a(\H^i),\H^j \rangle ^X_{0,k} \ \mathrm{for} \ a,i,j \geq 0 \ \mathrm{such}\  \mathrm{that} \  a+i+j=\mathrm{vdim}( \overline{\mathcal{M}}_{0,2}(X,k \L) )
$$
once the integrals $\langle \tau_a(\H^i),1 \rangle ^X_{0,k}$ are given for all $a,i \geq 0$ and $k >0 $ . These last invariants are indeed known as the next proposition shows.

\begin{proposition}
Let $a,i \geq 0$ and $k>0$ be integers such that 
$$
a+i=\mathrm{vdim}( \overline{\mathcal{M}}_{0,2}(X,k \L)).
$$
Then
\begin{itemize}
    \item for $| \m | \leq r+L-1$ and $i=0,...,r$ we have
    $$
    \langle \tau_{r+kd-1-i}(\H^i),1 \rangle^X_{0,k}=\mathrm{Coeff}\Bigg(\frac{\prod_{j=1}^L \prod_{\ell=0}^{km_j}(m_j x+\ell)}{\prod_{\ell=1}^k(x+\ell)^{r+L+1}},x^{r+L-i}\Bigg);
    $$
    \item for $| \m | = r+L$ and $i=0,...,r$ we have
    $$
    \langle \tau_{r+k-1-i}(\H^i),1 \rangle ^X_{0,k}=\sum_{h=0}^k \frac{(- \m !)^{k-h}}{(k-h)!}\mathrm{Coeff}\Bigg(\frac{\prod_{j=1}^L \prod_{\ell=0}^{hm_j}(m_j x+\ell)}{\prod_{\ell=1}^h(x+\ell)^{r+L+1}},x^{r+L-i}\Bigg)
    $$
    \end{itemize}
    where in both cases the coefficient of $x^{r+L-i}$ is meant to be the coefficient of the Taylor expansion in $x$ at $0$.
\end{proposition}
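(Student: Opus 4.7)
The plan is to reduce $\langle \tau_a(\H^i), 1 \rangle^X_{0,k}$ to one-point gravitational descendants of $X$, recognize these as coefficients of Givental's J-function, and make them explicit via Givental's mirror theorem. The first step is the string equation: since $1 = \gamma_0$ appears without a $\psi$-class,
\begin{equation*}
\langle \tau_a(\H^i), \tau_0(1)\rangle^X_{0,k} \;=\; \langle \tau_{a-1}(\H^i)\rangle^X_{0,1,k}
\end{equation*}
for all $a \geq 1$. Under the dimensional constraint $a = r+kd-1-i$ with $0 \leq i \leq r$, $k \geq 1$ and $d \geq 1$, one checks that $a \geq 1$ always holds (we have $r \geq 3$).

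The small J-function of $X$ is
\begin{equation*}
J^X(q, z) \;=\; z + \sum_{k \geq 1} q^k \sum_\alpha \gamma_\alpha \sum_{b \geq 0} z^{-b-1}\, \langle \tau_b(\gamma_\alpha^{\vee})\rangle^X_{0,1,k}.
\end{equation*}
By Lemma \ref{lem:monodromy argument} (applied with $n=0$), one-point descendants with primitive insertion vanish, so only the restricted basis $\{\H^i\}_{i=0}^r$ contributes, with Poincar\'e duals $\m^{-1}\H^{r-i}$. Matching the $z$-degree forced by the dimensional constraint then identifies
\begin{equation*}
\langle \tau_{r+kd-1-i}(\H^i), 1\rangle^X_{0,k} \;=\; \m \cdot \bigl[\,\H^{r-i}\, z^{\,i+1-r-kd}\, q^k \,\bigr]\, J^X(q,z).
\end{equation*}

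Finally, invoke Givental's mirror theorem for Fano complete intersections: $J^X(q,z) = I^X(q,z)$ when $|\m| \leq r+L-1$, whereas $J^X(q,z) = e^{-\m! q / z}\, I^X(q,z)$ when $|\m| = r+L$, where
\begin{equation*}
I^X(q,z) \;=\; z\, e^{\H\log q/z} \sum_{k \geq 0} q^k\, \frac{\prod_{j=1}^L \prod_{\ell=0}^{k m_j}(m_j \H + \ell z)}{\prod_{\ell=1}^k (\H + \ell z)^{r+L+1}}.
\end{equation*}
Setting $\H = xz$ and clearing the common $z$-factor turns the extraction of the $\H^{r-i}$-coefficient on $X$ into the extraction of $x^{r+L-i}$ on $\P^{r+L}$: the extra $L$ powers of $x$ arise from the $\ell=0$ factor $\prod_j m_j \H = \m^1 \H^L$, which is precisely the fundamental class $[X]$ inside $\P^{r+L}$ and accounts for the Poincar\'e-duality normalization $\m$. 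This yields the strictly Fano formula. When $|\m|=r+L$, expanding $e^{-\m! q/z} = \sum_{n \geq 0} \frac{(-\m!)^n}{n!}\, q^n z^{-n}$ and convolving with the $q$-series of $I^X$ produces the sum $\sum_{h=0}^k \frac{(-\m!)^{k-h}}{(k-h)!}$ weighting the same residue extraction at index $h$, which is the second formula.

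The main obstacle is the bookkeeping in this last step: correctly invoking the mirror transformation in the index-$1$ case $d=1$, and carefully tracking the $z$-powers together with the factor $\m^1 = \prod_j m_j$ coming from Poincar\'e duality on $X$ versus ambient integration on $\P^{r+L}$, so that the extraction lines up with the coefficient of $x^{r+L-i}$ in the stated rational function. The vanishing statement of Lemma \ref{lem:monodromy argument} for primitive insertions is essential — without it the identification of the extracted J-function coefficient with our specific invariant $\langle \tau_a(\H^i), 1\rangle^X_{0,k}$ would require further input.
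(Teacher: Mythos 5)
Your proof is correct and follows essentially the same route as the paper, whose proof is simply the citation of Givental's mirror theorem in the form of \cite[Theorems 4.2 and 4.17]{BDPP} (equality of $J$ and the hypergeometric series $I$, with the mirror factor $e^{-\m!q/z}$ in the index-one case $|\m|=r+L$); you have just unpacked that rephrasing via the string equation, the duality normalization $\m$, and the $x^{r+L-i}$ bookkeeping. Two minor tidying points: your $J$ and $I$ should carry the same convention for the $e^{\H\log q/z}$ prefactor before matching $q$-coefficients; the claim $a\geq 1$ fails exactly when $|\m|=r+L$, $k=1$, $i=r$ (there $a=0$ and both sides vanish, which is a quick check); and the monodromy lemma is not actually needed here, since primitive classes are orthogonal to the restricted ones under the Poincar\'e pairing, so they cannot contribute to the $\H^{r-i}$-component being extracted.
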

\begin{proof}
This is just a way of rephrasing \cite[Theorem 4.2 and Theorem 4.17]{BDPP}. Note that in \cite[Theorem 4.17]{BDPP} there is typo: in their notation, their index $m$ in the product appearing in the numerator should range from $0$ to $dl_j$, instead of from $1$ to $d$.
\end{proof}

\vspace{8pt}

\noindent Departement Mathematik, ETH Z\"urich\\
\noindent alessio.cela@math.ethz.ch

\end{document}